\documentclass[11pt]{article}
\usepackage{mathrsfs}
\usepackage{amsmath}
\usepackage{bbm}
\usepackage{amsmath,amsthm,amssymb,amscd}
\usepackage{latexsym}
\usepackage{aliascnt}
\usepackage{hyperref}
\usepackage[numbers,sort&compress]{natbib}
\usepackage{CJK}
\allowdisplaybreaks[4]
\usepackage{indentfirst}

\numberwithin{equation}{section}

\newtheorem{definition}{Definition}[section]
\newtheorem{remark}[definition]{Remark}
\newtheorem{theorem}[definition]{Theorem}

\newtheorem{lemma}[definition]{Lemma}
\newtheorem{proposition}[definition]{Proposition}

\newtheorem{example}[definition]{Example}

\begin{document}
\title{{\bf Stability for coupled second order evolution equations with indirect general memory-dampings without the equal-wave-speeds-type hypothesis}
\thanks{The work was supported partly by the National Natural Science Foundation of China (12361049, 12371116, 12171094) and the Shanghai Key Laboratory for Contemporary Applied Mathematics (08DZ2271900).}}

\author{Kun-Peng Jin$^{a}$,  Jin Liang $^{b}$\thanks{Corresponding author. \ E-mail: \ jinliang@sjtu.edu.cn} ,  Ti-Jun Xiao $^{c}$ \\
{\small $^a$ School of Mathematics and Statistics, Nanning Normal University}\\ {\small Nanning 530100, China}\\
{\small $^b$ School of Mathematical Sciences, Shanghai Jiao Tong University}\\ {\small Shanghai 200240, China}\\
{\small $^c$ School of Mathematical Sciences, Fudan University }\\{\small Shanghai 200433, China}}

\date{}
\maketitle
\begin{abstract}
  We study the stability for a system of coupled second order evolution equations with indirect general memory-damping without the equal-wave-speeds-type hypothesis in a Hilbert space, where the damping only appears just in one equation, the memory kernel can  not necessarily be nonnegative and nonincreasing, and the ``wave speeds" implied by the system can be different. Taking advantage of new processing ideas and with the help of the properties of the Generalized Positive Definite Kernel, we overcome difficulties caused by the different ``wave speeds", the lack of the decreasing and nonnegative property for the memory kernel and the system has only one equation with damping, and obtain an optimal polynomial stability result for the energy, which covers the previous related polynomial stability results for second order coupled equations (abstract or concrete) in the literature. Moreover, applications of the abstract result are given.

\vspace{0.1cm}
\noindent {\bf Keywords:}\quad  Different wave speeds; coupled systems; indirect damping;  generalized positive definite kernel; decay estimates, oscillating or sign-varying kernels.

\vspace{0.1cm}
\noindent {\bf 2020 AMS Subject Classification:} 35M13, 35B40, 45K05, 45M10, 74D05, 93D15,
\end{abstract}

\section{Introduction and preliminaries}

Systems of coupled evolution equations are more realistic  mathematical models describing many phenomena in biology material, viscoelastic fluid, petroleum industry, etc. The well known Timoshenko systems for beam and coupled Petrovsky-type systems are the typical examples for the systems of coupled evolution equations (cf., e.g., \cite{9, 28}). Since the work of Russell \cite{25}, the indirected damping has been considered in the study of stability of various evolution equations. The development of memory material let us see that the research involving the indirected damping via memory effect is necessary and profound. For the coupled systems with the indirected damping via memory, which is associated to two evolution equations and initial-boundary conditions, it is important to control the whole coupled system by only using a single damping for only one single equation. Such a stable problem is different from and much more complex than the case of single equations, as showed in Xiao and Liang \cite{28}, Jin, Liang and Xiao \cite{13,14}.

Let $H$ be a Hilbert space and let $A_{1}$, $A_{2}$, $B_{1}$ and $B_{2}$ be linear operators in $H$. We are concerned with the following abstract Cauchy problem for a system of coupled equations with memory, which arises from the theory of viscoelasticity,
\begin{eqnarray}
&& u''(t)+A_{1}u(t)-\int_{0}^{t}g(t-s)A_{1}u(s)ds+\alpha u(t)+B_{2}v(t)=0,\label{NI}\\[0.15cm]
&& v''(t)+A_{2}v(t)+B_{1}u(t)=0,\label{NII}\\[0.15cm]
& &u(0)=u_0,~~u'(0)=u_1,\label{NIII}\\[0.15cm]
& &v(0)=v_0,~~v'(0)=v_1,\label{NVI}
\end{eqnarray}
where $A_{1}$ and $A_{2}$ are positive self-adjoint operators in $H$, and $g(t)\in L^{1}(0, +\infty)$ is the memory kernel, which produces damping (memory-damping), and $g(t)$ may be oscillating or even sign-varying, that is, $g(t)$ may be not nonnegative and nonincreasing at all.
In the coupled system \eqref{NI}-\eqref{NVI} above, the damping only appears in the first equation, and the second equation is damped indirectly via its coupling with the first one.

In the present paper, we pay our attention to the study the stability for the system \eqref{NI}-\eqref{NVI} without the equal-wave-speeds-type hypothesis.  Basing on the properties of Generalized Positive Definite Kernel (GPDK), which is first introduced by Jin, Liang and Xiao in \cite{JLXGPDK}, and using some new ideas and techniques, we overcome difficulties caused by the different ``wave speeds" implied in the system, the lack of the decreasing and nonnegative property for the memory kernel $g(t)$ and the task of transmitting dissipation from one equation to another, and obtain an optimal polynomial stability result for the energy  due to $G(t)= \int_t^{+\infty} g(s)ds$ being a strongly $(t+1)^{\lambda_{0}}$-positive definite kernel. Our results generalizes and improves the previous related polynomial stability results for second order coupled equations (abstract or concrete) in the literature. For details, please see Theorem \ref{6T2} below. Moreover, we give some applications of the polynomial stability result.

We refer the reader to, e.g., \cite{Ala1, Ala2, Guesmia20252, Can1, Can2, Batty, Cav20, Con1, Con2, De1, De2, 9, G1, Guesmia2020, Guesmia2024, JDE2019, JLXGPDK, M1, Mustafa2022, Mustafa2023, Mustafa20232, P1} and the references cited there, for more results and techniques on the study of second order equations, the Timoshenko-type (Timoshenko) systems and coupled systems, which also stimulate our ideas.

This paper is organized as follows. Section 2 is about the properties of the Generalized Positive Definite Kernel, which will be used to build polynomial stability of the energy.  Section 3 is devoted to deriving the polynomial stability and integrability of the energy, where we give a series of lemmas. In Section 4, applications of our polynomial stability theorem are given.

Throughout the paper, $C$ is a generic positive constant and $H$ is a real Hilbert space with the scalar product $\langle\cdot,\cdot \rangle.$  For a linear operator $A$ in $H,$ $\mathcal{D}(A)$ and $[\mathcal{D}(A)]$ stand for, respectively, its domain and the domain endowed with the graph norm. By $C^i([0,\infty);H_0)$, for a normed linear space $H_0$ and $i\in \{0, 1, 2\}$, we denote the set of all $i$-times continuously differentiable $H_0$-valued functions on
$[0,\infty),$ and $C([0,\infty);H_0):= C^0([0,\infty);H_0)$.

Under following assumptions, we have the global existence and uniqueness theorem, regularity theorem, and approximation theorem below, by using the similar techniques given in \cite{14}.

{\rm \begin{enumerate}
\item[${\rm(I_1)}$]  $A_{1}$, $A_{2}$ are positive self-adjoint linear operators in $H$, with $\mathcal{D}(A_{1})$ and $\mathcal{D}(A_{2})$ dense in $H$, and they satisfy
\begin{eqnarray*}\label{3-1}
        &&a_{1}\langle A_{1}u, u\rangle \geq  \Vert u\Vert ^2 \ \ (u\in \mathcal{D}(A_{1})),\quad
        a_{2}\langle A_{2}v, v\rangle \geq  \Vert v\Vert ^2,\ \ (v\in \mathcal{D}(A_{2})),
\end{eqnarray*}
for constants $a_{1}>0$, $a_{2}>0$.

\item[${\rm(I_2)}$]
$B_{1}$, $B_{2}$ are linear operators in $H$ with $\mathcal{D}(B_{1})\supset\mathcal{D}(\sqrt{A_{1}})$, $\mathcal{D}(B_{2})\supset\mathcal{D}(\sqrt{A_{2}})$, satisfying
\begin{eqnarray*}\label{3-2}
\left\langle B_{1}u, v\right\rangle=\left\langle u,  B_{2}v\right\rangle, \qquad u\in\mathcal{D}(A_1),\quad v\in\mathcal{D}(A_{2}),
\end{eqnarray*}
and there are two positive constants $\beta, \beta_{1}>0$, such that
\begin{eqnarray*}\label{3-3}
 &&\beta\left\|\sqrt{A_{1}}u\right\|\leq\|B_{1}u\|\leq\beta_{1}\left\|\sqrt{A_{1}}u\right\| \quad \mbox{ for }u\in \mathcal{D}(\sqrt{A_{1}}),\\[0.15cm]
 &&\beta\left\|\sqrt{A_{2}}v\right\|\leq\|B_{2}v\|\leq\beta_{1}\left\|\sqrt{A_{2}}v\right\| \quad \mbox{ for }v\in \mathcal{D}(\sqrt{A_{2}}).
\end{eqnarray*}

\item[${\rm(I_3)}$]  $\alpha\geq 0$ is a constant, and $g\in L^{1}(0,+\infty)$ satisfies
\begin{equation*}\label{3-4}
  \int_0^{+\infty}g(t)dt <1,\ \ \int_0^{+\infty}g(t)dt+a_{1}\left(\alpha-\beta_{1}^2\right)<1,\ \
\int_{t}^{+\infty}g(s)ds ~\mbox{is a positive definite kernel}.
\end{equation*}
\end{enumerate}

\vspace{0.6cm}

A pair $(u,v)$ of functions is called a (classical) solution of (\ref{NI})-(\ref{NVI}) on $[0,T)$, $T>0$, if
        \begin{eqnarray*}\label{3-7}
         & &u \in~ C^2\left([0,T);H\right) \cap  C^{1}\left([0,T);[{\mathcal{D}(\sqrt{A_{1}})}]\right) \cap  C\left([0,T);[{\mathcal{D}(A_{1})}]\right),\\[0.15cm]
         & &v \in~ C^2\left([0,T);H\right) \cap  C^{1}\left([0,T);[{\mathcal{D}(\sqrt{A_{2}})}]\right) \cap  C\left([0,T);[{\mathcal{D}(A_{2})}]\right),
\end{eqnarray*}
satisfying \eqref{NI}-\eqref{NVI} for $t\in[0,T)$.

We define the energy of a solution ($u,$
$v$) of \eqref{NI}-\eqref{NVI} as
    \begin{eqnarray*}\label{3-8}
     E(t)=E_{u,v}(t)&=&\frac{1}{2}\left\|u'(t)\right\|^{2}+\frac{1-\int_{0}^{+\infty}g(s)ds}{2}\left\|\sqrt{A_{1}}u(t)\right\|^{2}\\[0.15cm]
     & &+\frac{1}{2}\left\|v'(t)\right\|^{2}+\frac{1}{2}\left\|\sqrt{A_{2}}v(t)\right\|^{2}+\frac{\alpha}{2}\left\|u(t)\right\|^{2}+\left\langle B_{1}u(t), v(t)\right\rangle.
    \end{eqnarray*}

The following theorem is about the global existence and uniqueness of solutions to \eqref{NI}-\eqref{NVI}, and the detailed derivation process can be found in references \cite{14,28}.

\begin{theorem}\label{3T1}
 Let assumptions ${\rm (I_1)}$-${\rm (I_3)}$ hold. Then,
for $u_0\in \mathcal{D}(A_{1})$, $v_0\in \mathcal{D}(A_{2})$,  $u_1\in
\mathcal{D}( \sqrt{A_{1}})$, and $v_1\in
\mathcal{D}( \sqrt{A_{2}})$,
the system \eqref{NI}-\eqref{NVI} has a unique solution  $(u(t), v(t))$ on $[0,+\infty)$, and
its energy $E(t)$ satisfies that for $t\ge 0,$
    \begin{eqnarray}\label{3N-10}
    E(t)\leq CE(0),\quad c_0\widetilde{E}(t)\leq E(t)\leq c_1\widetilde{E}(t),
    \end{eqnarray}
where $$\widetilde{E}(t)=\widetilde{E}_{u,v}(t)=\left\Vert u'(t)\right\Vert^2+\left\Vert v'(t)\right\Vert^2+\left\Vert \sqrt{A_{1}}u(t)\right\Vert^2
        +\left\Vert \sqrt{A_{2}}v(t)\right\Vert^2,$$ $c_0 > 0$, $c_1 > 0$ and $C>0$ are constants.
\end{theorem}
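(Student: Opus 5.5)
The plan has three parts: equivalence of $E$ and $\widetilde E$, an energy identity in which the memory term appears as a positive definite quadratic form, and existence and uniqueness, which I would transfer from \cite{14,28}.

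\textbf{Equivalence of $E$ and $\widetilde E$.} I treat this first, since it is purely algebraic. By ${\rm(I_2)}$, for any $\theta\in(0,1)$,
\[
|\langle B_1u,v\rangle|=|\langle u,B_2v\rangle|\le \beta_1\|u\|\,\|\sqrt{A_2}v\|\le \frac{\beta_1^2}{2\theta}\|u\|^2+\frac{\theta}{2}\|\sqrt{A_2}v\|^2 .
\]
Write $G(0)=\int_0^{+\infty}g(s)ds$. The estimate gives
\[
2E\ge\|u'\|^2+\|v'\|^2+(1-G(0))\|\sqrt{A_1}u\|^2+\Big(\alpha-\frac{\beta_1^2}{\theta}\Big)\|u\|^2+(1-\theta)\|\sqrt{A_2}v\|^2 .
\]
If $\alpha-\beta_1^2/\theta\ge0$, the lower bound follows from $G(0)<1$. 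Otherwise I use $\|u\|^2\le a_1\|\sqrt{A_1}u\|^2$ from ${\rm(I_1)}$. The coefficient of $\|\sqrt{A_1}u\|^2$ then becomes $1-G(0)+a_1(\alpha-\beta_1^2/\theta)$. This is positive for $\theta$ close to $1$ by the second condition in ${\rm(I_3)}$. Hence $E\ge c_0\widetilde E$.

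The upper bound $E\le c_1\widetilde E$ is immediate from ${\rm(I_1)}$, ${\rm(I_2)}$ and Cauchy--Schwarz.

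\textbf{Energy identity.} Put $G(t)=\int_t^{+\infty}g(s)ds$, so that $\partial_sG(t-s)=g(t-s)$, and write $w=\sqrt{A_1}u$. Integrating by parts in $s$ gives
\[
\int_0^tg(t-s)A_1u(s)ds=G(0)A_1u(t)-G(t)A_1u_0-\int_0^tG(t-s)A_1u'(s)ds .
\]
Equation \eqref{NI} therefore becomes
\[
u''+(1-G(0))A_1u+G(t)A_1u_0+\int_0^tG(t-s)A_1u'(s)ds+\alpha u+B_2v=0 .
\]
I take the inner product of this equation with $u'$ and of \eqref{NII} with $v'$, and add. The symmetry $\langle B_1u,v\rangle=\langle u,B_2v\rangle$ makes the coupling terms combine into $\frac{d}{dt}\langle B_1u,v\rangle$. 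Integrating over $[0,T]$ yields
\[
E(T)+\int_0^T\Big\langle\int_0^tG(t-s)w'(s)ds,\,w'(t)\Big\rangle dt+\int_0^TG(t)\langle\sqrt{A_1}u_0,w'(t)\rangle dt=E(0).
\]
The second term is nonnegative because $G$ is a positive definite kernel, which is the third condition in ${\rm(I_3)}$.

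\textbf{Bounding the source term.} The third term is the main obstacle, since no sign is available for it. I integrate it by parts once more:
\[
\int_0^TG(t)\langle\sqrt{A_1}u_0,w'\rangle dt=G(T)\langle\sqrt{A_1}u_0,w(T)\rangle-G(0)\|\sqrt{A_1}u_0\|^2+\int_0^Tg(t)\langle\sqrt{A_1}u_0,w(t)\rangle dt .
\]
Since $|G|\le\|g\|_{L^1}$ and $g\in L^1$, Young's inequality bounds this by
\[
\varepsilon\sup_{[0,T]}\|w\|^2+C_\varepsilon\|\sqrt{A_1}u_0\|^2\le \varepsilon c_0^{-1}\sup_{[0,T]}E+C E(0).
\]
I take the supremum over $T$ in a finite window, use the equivalence proved above, and absorb the $\varepsilon$-term with $\varepsilon$ small. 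This gives $E(t)\le CE(0)$ for all $t\ge0$.

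\textbf{Existence and uniqueness.} I follow \cite{14,28}. For the integro-differential system I would either use a fixed-point (Picard) argument on $[0,T]$ in the space of functions with the stated regularity, or use the history (Dafermos) framework combined with Volterra resolvent theory. In either route the coupling $B_1,B_2$ is a bounded perturbation relative to $\sqrt{A_1},\sqrt{A_2}$, and $(A_1,A_2)$ generates the unperturbed conservative group. The a priori bound $E(t)\le CE(0)$ then excludes blow-up, so the local solution extends to all of $[0,+\infty)$.

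Uniqueness follows by applying the same energy estimate to the difference of two solutions, which has zero initial data.
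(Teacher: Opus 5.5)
The paper gives no argument for this theorem; it simply defers to \cite{14,28}. Your proof of the estimates is correct and self-contained. Its core, the identity $g*w=G(0)w-G(t)w(0)-G*w'$ together with positive definiteness of $G$, is the same computation the paper later runs in Lemma~\ref{6N-1}. What your version adds is that it is not circular. Lemma~\ref{6N-1} at $\lambda=0$ handles the source terms $G(t)\langle\sqrt{A_1}u_0,\sqrt{A_1}u(t)\rangle$ and $\int_0^t g(s)\langle\sqrt{A_1}u_0,\sqrt{A_1}u(s)\rangle\,ds$ by invoking $\|\sqrt{A_1}u(s)\|^2\le CE(0)$, which is the present theorem. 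Your absorption of $\varepsilon\sup_{[0,T]}E$ on a finite window actually proves that bound.

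There are two caveats. First, your coercivity step needs $1-G(0)+a_1(\alpha-\beta_1^2)>0$, i.e.\ $G(0)+a_1(\beta_1^2-\alpha)<1$. This has the opposite sign from the condition printed in ${\rm(I_3)}$. The printed version must be a slip, for two reasons.
\begin{itemize}
\item It already follows from $G(0)<1$ whenever $\alpha<\beta_1^2$, so it adds nothing in exactly the case where it is needed.
\item It admits a counterexample. Take $A_1=A_2$ with lowest eigenvalue $1/a_1$ and unit eigenvector $e$, $B_1=B_2=b\sqrt{A_1}$, $\alpha=0$ and $a_1b^2>1-G(0)$. The state $u=e$, $v=-b\sqrt{a_1}\,e$, $u'=v'=0$ then has $E=(1-G(0)-a_1b^2)/(2a_1)<0$.
\end{itemize}
So your reading is the intended one, but you should state explicitly that you use it rather than cite ${\rm(I_3)}$ as written.

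Second, neither existence mechanism you sketch works as stated here.
\begin{itemize}
\item The Dafermos history space needs the weight $-g'\ge 0$, which is not available. Under ${\rm(I_3)}$ the kernel is only in $L^1$, and sign-varying kernels are the whole point of the paper.
\item A plain Picard iteration that treats $g*A_1u$ as a forcing term loses a derivative, because that term has the same order as $A_1u$.
\end{itemize}
For existence you are therefore relying on \cite{14,28}, exactly as the paper does.
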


\begin{theorem}\label{reg} Assume that conditions ${\rm (I_1)}$-${\rm (I_3)}$ are satisfied. Let
\begin{eqnarray}\label{d0}
  \mathcal{D}_0=\Big\{(y_1, y_2)\in \mathcal{D}(A_1)\times \mathcal{D}(A_2); \ (A_1+\alpha)y_1+ B_2y_2\in \mathcal{D}(\sqrt{A_1}),~ B_1y_1+A_2y_2\in \mathcal{D}(\sqrt{A_2})\Big\}.
\end{eqnarray}
Then
\begin{enumerate}
\item[{\rm (i)}] The solution $(u(t),v(t))$ of Cauchy problem \eqref{NI}-\eqref{NVI} satisfies
\begin{eqnarray*}
        && u\in~ C^3([0,\infty);H) \cap C^2([0,\infty);[{\mathcal{D}(\sqrt{A_1})}])\cap  C^1([0,\infty);[{\mathcal{D}(A_1)}]),\\[0.15cm]
        && v\in~ C^3([0,\infty);H) \cap C^2([0,\infty);[{\mathcal{D}(\sqrt{A_2})}])\cap  C^1([0,\infty);[{\mathcal{D}(A_2)}]),
\end{eqnarray*}
whenever $(u_0, v_0)\in \mathcal{D}_0,$ and $(u_1, v_1)\in \mathcal{D}(A_1)\times \mathcal{D}(A_2).$
\item[{\rm (ii)}] There exist sequences
\begin{equation*}
  \{(u_{0n},v_{0n})\}_{n=1}^\infty\subset \mathcal{D}_0,\quad \{(u_{1n},v_{1n})\}_{n=1}^\infty\subset \mathcal{D}(A_1)\times \mathcal{D}(A_2)
\end{equation*}
such that the solution $(u_n(t),v_n(t))$ of \eqref{NI}-\eqref{NII} with initial data
\begin{equation*}
   u_n(0)=u_{0n},~v_n(0)=v_{0n},~u_n'(0)=u_{1n},~v_n'(0)=v_{1n}
\end{equation*}
satisfies that as $n\rightarrow\infty$,
\begin{eqnarray*}
&& A_1u_{0n}\rightarrow A_1u_0 ,\quad A_2v_{0n}\rightarrow A_2v_0 ,\label{1}\\[0.18cm]
&& \sqrt{A_1} u_{1n}\rightarrow \sqrt{A_1}u_1 ,\quad \sqrt{A_2}v_{1n}\rightarrow \sqrt{A_2}v_1 ,\label{2}\\[0.18cm]
&& u_n''(t)\rightarrow u''(t) , \quad \sqrt{A_1}u_n'(t)\rightarrow \sqrt{A_1}u'(t) ,\nonumber\\[0.18cm]
 &&  v_n''(t)\rightarrow v''(t) , \quad \sqrt{A_2}v_n'(t)\rightarrow \sqrt{A_2}v'(t)\nonumber
\end{eqnarray*}
in $H$, uniformly on bounded subsets of $t\geq 0$.
\end{enumerate}
\end{theorem}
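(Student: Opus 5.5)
Part (i) is a regularity statement, and I would get it from the standard "differentiate the equation" argument. The idea is to show that $(\tilde u,\tilde v):=(u',v')$ is itself a solution of a problem of the same type, to which Theorem \ref{3T1} applies.

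Differentiating \eqref{NI} formally in $t$ gives
\[
u'''+A_1u'-\int_0^t g(t-s)A_1u'(s)\,ds-g(t)A_1u_0+\alpha u'+B_2v'=0,
\]
using $\frac{d}{dt}\int_0^t g(t-s)A_1u(s)ds=g(t)A_1u_0+\int_0^t g(t-s)A_1u'(s)ds$. So $(\tilde u,\tilde v)$ formally solves \eqref{NI}-\eqref{NII} with the extra forcing term $f(t)=g(t)A_1u_0$ in the first equation. Its initial data are $\tilde u(0)=u_1$, $\tilde v(0)=v_1$, $\tilde u'(0)=u''(0)=-(A_1+\alpha)u_0-B_2v_0$ and $\tilde v'(0)=-B_1u_0-A_2v_0$.

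The hypotheses match this problem exactly. $(u_1,v_1)\in\mathcal D(A_1)\times\mathcal D(A_2)$ supplies the required domain for $\tilde u(0),\tilde v(0)$. The condition $(u_0,v_0)\in\mathcal D_0$ is precisely $\tilde u'(0)\in\mathcal D(\sqrt{A_1})$ and $\tilde v'(0)\in\mathcal D(\sqrt{A_2})$. The forcing $f\in L^1_{loc}([0,\infty);H)$ since $g\in L^1$ and $A_1u_0\in H$.

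Theorem \ref{3T1} is stated only for the homogeneous problem, so I would work with the variation-of-constants (resolvent) framework of \cite{14,28} that underlies it. There the problem is rewritten on a history/state space, where the generator produces a $C_0$-semigroup. Existence for the inhomogeneous problem then follows from Duhamel's formula. The main obstacle I expect is that $f$ is only $L^1$ in time, not $C^1$, so Duhamel yields only a mild solution in general.

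To get around this, I would avoid $f$ entirely and work at the level of the semigroup on the extended state $(u,u',v,v',\eta)$, with $\eta$ the history variable, as in \cite{14}. The conditions $(u_0,v_0)\in\mathcal D_0$ and $(u_1,v_1)\in\mathcal D(A_1)\times\mathcal D(A_2)$ should say exactly that the initial state lies in $\mathcal D(\mathcal A)$, where $\mathcal A$ is the generator. For such data, semigroup theory gives $U\in C^1([0,\infty);X)\cap C([0,\infty);[\mathcal D(\mathcal A)])$. Translating this back yields $u'\in C^1([0,\infty);[\mathcal D(\sqrt{A_1})])\cap C([0,\infty);[\mathcal D(A_1)])$ and $u''\in C^1([0,\infty);H)$, and likewise for $v$, which is the claim. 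Continuity of $u'$ into $[\mathcal D(A_1)]$ is then read off from the equation itself, since $A_1u'$ can be expressed through $u'''$, the memory term and the coupling term.

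For (ii), I would use that $\mathcal D(\mathcal A)$ is dense in the phase space, or directly that $\mathcal D_0\times(\mathcal D(A_1)\times\mathcal D(A_2))$ is dense in $[\mathcal D(A_1)]\times[\mathcal D(A_2)]\times[\mathcal D(\sqrt{A_1})]\times[\mathcal D(\sqrt{A_2})]$. For this I would take $u_{0n}=(I+\frac1nA_1)^{-1}u_0$ and similarly for the other components. The coupling condition in $\mathcal D_0$ would be handled by solving $(A_1+\alpha)y_1+B_2y_2=w_n$, $B_1y_1+A_2y_2=z_n$ with smooth right-hand sides $w_n,z_n$ approximating the data. This stationary system is coercive by ${\rm (I_3)}$, via Lax–Milgram.

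Next I would apply the energy inequality \eqref{3N-10} of Theorem \ref{3T1} to the difference of the time-derivative solutions $(u_n'-u',v_n'-v')$, which solve the homogeneous problem at the differentiated level. This gives uniform convergence of $u_n''$, $\sqrt{A_1}u_n'$, $v_n''$ and $\sqrt{A_2}v_n'$ on bounded time intervals, once one checks that the initial energies of the differences tend to zero. That last check follows from the convergences of the data listed in (ii).
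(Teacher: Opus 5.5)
Part (i) has a genuine gap: the history-variable workaround does not get past the obstacle you correctly flagged (the paper itself only defers to earlier work, so I judge your argument on its own).

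\textbf{Part (i).} First, a Dafermos history space needs $g$ differentiable with $-g'\ge 0$ as the weight of $L^2_{-g'}$, and dissipativity needs further monotonicity. Under ${\rm(I_3)}$, $g$ is only in $L^1$ and may change sign, so no such generator is available. Second, even when it is, \eqref{NI} is a zero-past-history problem. Since $u\equiv 0$ on $(-\infty,0)$, the initial history is $\eta^0(s)=u_0$ for all $s>0$, which violates the compatibility condition $\eta(0)=0$. The defect is then transported: $\eta^t$ jumps by $u_0$ at $s=t$. So the orbit never enters $\mathcal D(\mathcal A)$ when $u_0\neq 0$. That jump is exactly the source $g(t)A_1u_0$; the reformulation hides it rather than removing it.

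Your bookkeeping is also off by one derivative. The hypotheses of (i) correspond to $\mathcal A U_0\in\mathcal D(\mathcal A)$, i.e.\ $U_0\in\mathcal D(\mathcal A^2)$. Data with $U_0\in\mathcal D(\mathcal A)$ give only the regularity already contained in Theorem \ref{3T1}, not $u''\in C^1([0,\infty);H)$.

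More fundamentally, the obstacle is intrinsic. Under the regularity asserted in (i), a.e.
\[
u'''=-(A_1+\alpha)u'+g*A_1u'-B_2v'+g(t)A_1u_0 ,
\]
and every term except the last is continuous. So (i) forces $g$ to coincide a.e.\ with a continuous function whenever $A_1u_0\neq 0$. For example, $g=c\,\mathbf 1_{[0,1]}$ with $0<c<1$ meets the kernel conditions of ${\rm(I_3)}$: the even extension of $G(t)=c(1-t)^+$ has Fourier transform $c\,(\sin(\omega/2)/(\omega/2))^2\ge 0$. Yet this $g$ makes $u'''$ jump at $t=1$.

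The missing ingredient is therefore extra regularity of $g$, e.g.\ $g\in W^{1,1}_{loc}$. This holds under ${\rm(I_3')}$, which is where the theorem is actually used. With it, your first idea goes through:
\begin{enumerate}
\item[1.] The source $f(t)=g(t)A_1u_0$ lies in $W^{1,1}_{loc}([0,\infty);H)$.
\item[2.] Hence the differentiated problem with data $(u_1,v_1,-(A_1+\alpha)u_0-B_2v_0,-B_1u_0-A_2v_0)$ has a classical solution $(\tilde u,\tilde v)$, by the inhomogeneous analogue of Theorem \ref{3T1}.
\item[3.] Set $\hat u=u_0+\int_0^t\tilde u$ and $\hat v=v_0+\int_0^t\tilde v$. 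These solve \eqref{NI}-\eqref{NVI}, since the residuals have zero derivative and vanish at $t=0$.
\item[4.] Uniqueness gives $(u',v')=(\tilde u,\tilde v)$, which yields all of (i).
\end{enumerate}

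\textbf{Part (ii).} The Lax--Milgram construction is the right one; the resolvent $(I+\frac1n A_1)^{-1}u_0$ alone does not land in $\mathcal D_0$. Solve the stationary system with right-hand sides in $\mathcal D(\sqrt{A_1})\times\mathcal D(\sqrt{A_2})$ converging in $H\times H$ to $((A_1+\alpha)u_0+B_2v_0,\,B_1u_0+A_2v_0)$. Coercivity gives convergence in the energy norm. Reading $A_1u_{0n}$ and $A_2v_{0n}$ off the equations and using ${\rm(I_2)}$ then gives $A_1u_{0n}\to A_1u_0$ and $A_2v_{0n}\to A_2v_0$.

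The convergence step, however, is false as stated, for two reasons:
\begin{enumerate}
\item[1.] $(u_n'-u',v_n'-v')$ does not solve the homogeneous problem, because the sources leave the term $g(t)A_1(u_{0n}-u_0)$.
\item[2.] For data only as in Theorem \ref{3T1}, $(u',v')$ is not known to solve the differentiated system at all, so \eqref{3N-10} cannot be applied to it.
\end{enumerate}
Instead, apply the energy identity of the differentiated system to $(u_n'-u_m',v_n'-v_m')$, both of which are regular by (i). Bound the source term by $\|g\|_{L^1}\|A_1(u_{0n}-u_{0m})\|\sup_{[0,T]}\|u_n''-u_m''\|$, and the memory boundary terms as in the first part of the proof of Lemma \ref{6LD-2}. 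This makes $u_n''$, $\sqrt{A_1}u_n'$, $v_n''$ and $\sqrt{A_2}v_n'$ Cauchy uniformly on $[0,T]$. Finally, \eqref{3N-10} for the genuinely homogeneous differences $(u_n-u,v_n-v)$, together with the closedness of $\sqrt{A_1}$ and $\sqrt{A_2}$, identifies the limits as $u''$, $\sqrt{A_1}u'$, $v''$ and $\sqrt{A_2}v'$.
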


\section{Generalized Positive Definite Kernel with its properties}

Recall that
\begin{definition}\label{2D2}{\em
Let $h\in L^{1}_{loc}(0,+\infty)$, and $\varphi\in L_{loc}^{\infty}(0, +\infty)$ with $\varphi(t)>0$ for $t>0.$ The function $h$ is called to be a  $\varphi$-positive definite kernel if
\begin{align*}
\int^{t}_{0}\varphi(s)\left\langle h*u(s), u(s)\right\rangle ds\geq 0,   \quad\quad \forall \ t\geq 0,
\end{align*}
for any $u\in L^{2}_{loc}(0,+\infty; H)$. Moreover, $h$ is said to be a strongly $\varphi$-positive definite kernel if there exist two constants
$\delta >0$, $N>0$ such that $h(t)-\delta e^{-Nt}$ is a $\varphi$-positive definite kernel.}
\end{definition}
We call the above function $h$ as the Generalized Positive Definite Kernel (GPDK) (\cite{JLXGPDK}). Next, we recall some properties for the GPDK (see details in \cite{JLXGPDK}).

\begin{proposition}\label{2F-P1}
Suppose that $\kappa$ is a real number. Then $h(t)$  is a (strongly) $e^{\kappa t}$-positive definite kernel if and only if $e^{\frac{\kappa}{2}t}h(t)$ is a (strongly) positive definite kernel.
\end{proposition}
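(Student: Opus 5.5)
The approach is a change of unknown that converts the weighted quadratic form into an unweighted one. Fix $u\in L^2_{loc}(0,+\infty;H)$ and set $w(s)=e^{\frac{\kappa}{2}s}u(s)$. Since $e^{\pm\frac{\kappa}{2}s}$ is bounded on bounded intervals, the map $u\mapsto w$ is a bijection of $L^2_{loc}(0,+\infty;H)$ onto itself, with inverse $w\mapsto e^{-\frac{\kappa}{2}s}w$. The key identity is
\begin{align*}
e^{\kappa s}\langle h*u(s),u(s)\rangle
&=\Big\langle \int_0^s e^{\frac{\kappa}{2}(s-\tau)}h(s-\tau)\,e^{\frac{\kappa}{2}\tau}u(\tau)\,d\tau,\; e^{\frac{\kappa}{2}s}u(s)\Big\rangle\\
&=\langle (e^{\frac{\kappa}{2}\cdot}h)*w(s),w(s)\rangle .
\end{align*}
It follows by splitting $e^{\kappa s}=e^{\frac{\kappa}{2}s}e^{\frac{\kappa}{2}s}$ and writing $e^{\frac{\kappa}{2}s}=e^{\frac{\kappa}{2}(s-\tau)}e^{\frac{\kappa}{2}\tau}$ inside the convolution.

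Integrating the identity over $[0,t]$ gives
\[
\int_0^t e^{\kappa s}\langle h*u(s),u(s)\rangle ds=\int_0^t\langle (e^{\frac{\kappa}{2}\cdot}h)*w(s),w(s)\rangle ds .
\]
Here ``positive definite'' means $1$-positive definite, in the sense of Definition \ref{2D2} with $\varphi\equiv1$. Because $u\mapsto w$ is a bijection of $L^2_{loc}$, the left side is nonnegative for every $u$ and every $t$ exactly when the right side is nonnegative for every $w$ and every $t$. This proves the non-strong equivalence. One routine check is that $e^{\frac{\kappa}{2}t}h\in L^1_{loc}$ whenever $h\in L^1_{loc}$, and conversely.

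For the strong version, suppose first that $h-\delta e^{-Nt}$ is $e^{\kappa t}$-positive definite. By the equivalence just proved, $e^{\frac{\kappa}{2}t}h-\delta e^{-(N-\frac{\kappa}{2})t}$ is then positive definite. If $N':=N-\frac{\kappa}{2}>0$, we are done. The main obstacle is the case $N'\le 0$, since the definition requires a positive exponent. The plan is to enlarge $N$ first: show that if $h-\delta e^{-Nt}$ is $\varphi$-positive definite, then so is $h-\delta e^{-Mt}$ for every $M\ge N$. This reduces to showing that $\delta(e^{-Nt}-e^{-Mt})$ is $\varphi$-positive definite for $\varphi=e^{\kappa t}$, and positive-definiteness is closed under sums.

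Via the transformation above, this amounts to showing that $e^{\frac{\kappa}{2}t}(e^{-Nt}-e^{-Mt})=e^{-N't}-e^{-M't}$ is positive definite, where $M'=M-\frac{\kappa}{2}$. One route is the representation
\[
e^{-N't}-e^{-M't}=\int_{N'}^{M'}t\,e^{-rt}\,dr .
\]
This is problematic because $te^{-rt}$ is not positive definite in general. The safer route is to work on the Fourier side, where
\[
\mathrm{Re}\,\widehat{\;\cdot\;}=\frac{N'}{N'^2+\xi^2}-\frac{M'}{M'^2+\xi^2}.
\]
This is nonnegative for all $\xi$ when $0<N'\le M'\le$ suitable bounds, but not in general.

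Since this step is delicate, I would instead simply choose $M$ large with $M-\frac{\kappa}{2}>0$ and argue directly that $e^{-N't}$ with $N'\le0$ can be handled. If this fails, I would rely on the definition in \cite{JLXGPDK}, which presumably allows one to take $N$ as large as needed. The converse direction, from strongly positive definite $e^{\frac{\kappa}{2}t}h$ to $h$, is symmetric: from $e^{\frac{\kappa}{2}t}h-\delta e^{-Nt}$ one gets $h-\delta e^{-(N+\frac{\kappa}{2})t}$, with the same sign caveat when $\kappa<0$.
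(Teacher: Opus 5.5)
Your proof of the non-strong equivalence is correct and complete: the substitution $w=e^{\frac{\kappa}{2}s}u$ is a bijection of $L^2_{loc}(0,+\infty;H)$, and it converts the $e^{\kappa t}$-weighted form of $h$ into the unweighted form of $e^{\frac{\kappa}{2}t}h$. The paper gives no proof of this statement, only a pointer to \cite{JLXGPDK}.

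The gap is in the strong case: you never handle a nonpositive shifted exponent, and the device you propose is false. Enlarging $N$ with $\delta$ fixed fails already for $\varphi\equiv1$. For $M>N$ the cosine transform of $e^{-Nt}-e^{-Mt}$ equals $\frac{(M-N)(NM-\xi^2)}{(N^2+\xi^2)(M^2+\xi^2)}$, which is negative for $\xi^2>NM$. So it is never nonnegative ``under suitable bounds'' unless $M=N$. What does work in the unweighted setting is to shrink $\delta$ while enlarging $N$: for $0<N\le M$,
\[
\int_0^{\infty}\Big(e^{-Nt}-\tfrac{N}{M}e^{-Mt}\Big)\cos(\xi t)\,dt=\frac{N(M^2-N^2)}{(N^2+\xi^2)(M^2+\xi^2)}\ge 0 ,
\]
so $e^{-Nt}-\frac{N}{M}e^{-Mt}$ is positive definite. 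This closes the direction you left open ($\kappa<0$, from $e^{\frac{\kappa}{2}t}h$ to $h$). If $e^{\frac{\kappa}{2}t}h-\delta e^{-Nt}$ is positive definite, so is $e^{\frac{\kappa}{2}t}h-\frac{\delta N}{M}e^{-Mt}$ for every $M\ge N$. Choose such an $M$ with $M>-\frac{\kappa}{2}$ and undo the substitution: $h-\frac{\delta N}{M}e^{-(M+\frac{\kappa}{2})t}$ is $e^{\kappa t}$-positive definite, with $M+\frac{\kappa}{2}>0$.

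In the other direction (from $h$ to $e^{\frac{\kappa}{2}t}h$, with $\kappa>0$ and $N\le\frac{\kappa}{2}$), no argument can succeed. With the definition of ``strongly'' as printed, the implication is false, so appealing to \cite{JLXGPDK} ``presumably'' allowing large $N$ does not help. Take $h(t)=e^{-t}$ and $\kappa=2$.
\begin{itemize}
\item $h-e^{-t}\equiv0$ is trivially $e^{2t}$-positive definite, so $h$ is strongly $e^{2t}$-positive definite with $\delta=N=1$.
\item But $e^{t}h\equiv1$, and $k=1-\delta'e^{-Mt}$ is not positive definite for any $\delta',M>0$. The even extension of a continuous kernel of positive type is a positive definite function, hence $|k(t)|\le k(0)$, whereas here $k(t)\to1>1-\delta'=k(0)$.
\end{itemize}
In weighted terms, $N$ cannot be pushed past $\frac{\kappa}{2}$.

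So your argument, supplemented by the shrinking trick, proves three things: the non-strong equivalence; the strong implication from $e^{\frac{\kappa}{2}t}h$ to $h$ for every real $\kappa$; and the strong implication from $h$ to $e^{\frac{\kappa}{2}t}h$ whenever a witness $N>\frac{\kappa}{2}$ exists, in particular for all $\kappa\le0$. The full strong ``if and only if'' needs that restriction or a modified definition.
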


\begin{proposition}\label{2P3}
Assume that $\psi\in L_{loc}^{\infty}(0, +\infty)$ is a decreasing positive function, and $h$ is a (strongly) $\varphi(t)$-positive definite kernel. Then $h$  is a (strongly) $\psi\varphi$-positive definite kernel.
\end{proposition}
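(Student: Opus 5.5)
The plan is to reduce the claim to a layer-cake (Abel-type) representation of the decreasing weight $\psi$, combined with the fact that all partial integrals of the $\varphi$-weighted quadratic form are nonnegative. Fix $u\in L^{2}_{loc}(0,+\infty;H)$ and set
$$
f(s)=\varphi(s)\left\langle h*u(s),u(s)\right\rangle ,\qquad F(\tau)=\int_0^{\tau} f(s)\,ds .
$$
Since $h\in L^1_{loc}$, $u\in L^2_{loc}$ and $\varphi\in L^\infty_{loc}$, we have $f\in L^1_{loc}$. The hypothesis that $h$ is a $\varphi$-positive definite kernel says exactly that $F(\tau)\ge 0$ for every $\tau\ge0$. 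The goal is to show that $\int_0^t\psi(s)f(s)\,ds\ge 0$ for every $t\ge 0$. Here I take for granted, as the definition implicitly does, that $\psi f$ is integrable on $(0,t)$.

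\emph{Layer-cake step.} Since $\psi>0$, we can write $\psi(s)=\int_0^{\infty}\mathbbm{1}_{\{\psi(s)>r\}}\,dr$. Because $\psi$ is decreasing, for each $r>0$ the superlevel set $\{s>0:\psi(s)>r\}$ is an interval with left endpoint $0$. Its right endpoint $\tau_r\in[0,+\infty]$ is nonincreasing in $r$. Whether that endpoint is included is irrelevant, since it is a single point and carries no Lebesgue measure. Fubini's theorem then gives
$$
\int_0^t\psi(s)f(s)\,ds=\int_0^{\infty}\Big(\int_0^{\min(t,\tau_r)} f(s)\,ds\Big)dr=\int_0^\infty F\big(\min(t,\tau_r)\big)\,dr\ \ge 0 .
$$
The interchange is legitimate because the double integral of $|f(s)|\mathbbm{1}_{\{\psi(s)>r\}}\mathbbm{1}_{(0,t)}(s)$ equals $\int_0^t\psi|f|\,ds<\infty$. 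The measurability of $r\mapsto \tau_r$ follows from its monotonicity.

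\emph{Strong version.} If $h$ is a strongly $\varphi$-positive definite kernel, then $h-\delta e^{-Nt}$ is a $\varphi$-positive definite kernel for some $\delta,N>0$. Applying the non-strong result to $h-\delta e^{-Nt}$ shows that it is $\psi\varphi$-positive definite, so $h$ is strongly $\psi\varphi$-positive definite with the same constants.

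\emph{Main obstacle.} The only delicate point is the possible unboundedness of $\psi$ near $s=0$, because $\psi$ is merely in $L^\infty_{loc}(0,+\infty)$. This is why I prefer the layer-cake argument to integrating by parts against $d\psi$ on $[\varepsilon,t]$. Integration by parts produces a boundary term $-\psi(\varepsilon)F(\varepsilon)$ with the wrong sign, whose vanishing as $\varepsilon\to0$ is not clear. The Fubini representation avoids boundary terms entirely and needs only the integrability of $\psi f$ on $(0,t)$.
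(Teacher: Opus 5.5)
Your proof is correct. The paper does not prove Proposition~\ref{2P3} itself; it is only recalled from \cite{JLXGPDK}, so there is no in-paper argument to match.

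The content of any proof is what you isolate. With $f=\varphi\langle h*u,u\rangle$ and $F(\tau)=\int_0^\tau f\,ds$, the hypothesis says $F\ge 0$ on $[0,\infty)$, and one must show $\int_0^t\psi f\,ds\ge 0$ for nonincreasing $\psi\ge 0$. The convolution structure plays no role.

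The usual short route is Bonnet's second mean value theorem, $\int_0^t\psi f\,ds=\psi(0^+)F(\xi)$ for some $\xi\in[0,t]$. This suffices whenever $\psi(0^+)<\infty$, which is the only case the paper needs, e.g.\ $\psi=(t+1)^{-\lambda_0}$ to pass from \eqref{6pos} to strong positive definiteness of $G$. Your identity $\int_0^t\psi f\,ds=\int_0^\infty F(\min(t,\tau_r))\,dr$ is essentially the integrated form of that theorem. It is self-contained and remains valid for unbounded $\psi$, provided $\psi f\in L^1(0,t)$.

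Two small corrections to your closing remarks:
\begin{itemize}
\item The boundary term in the integration-by-parts route is harmless. Monotonicity gives $0\le\psi(\varepsilon)F(\varepsilon)\le\psi(\varepsilon)\int_0^\varepsilon|f|\,ds\le\int_0^\varepsilon\psi|f|\,ds\to 0$, so that route works under exactly your integrability assumption.
\item That assumption is automatic under the reading of $L^\infty_{loc}$ you used to get $f\in L^1(0,t)$, namely boundedness on $(0,T)$. A nonincreasing $\psi$ that is essentially bounded near $0$ satisfies $\psi\le\psi(0^+)<\infty$. This also makes $\psi\varphi$ an admissible weight in Definition~\ref{2D2}.
\end{itemize}
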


\begin{proposition}\label{2NP2}
Assume that $(t+a)^{\nu}h(t)$ $(a\geq 0,$ $\nu\in\mathbb{R^{+}})$ is a (strongly) positive definite kernel. Then, for any $b>0$, $h(t)$ is a (strongly) $(t+a+b)^{\nu}$-positive definite kernel.
\end{proposition}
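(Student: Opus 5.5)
Since $(t+a)^{\nu}h(t)$ is a positive definite kernel, the plan is to write $h(t-s)$ as $(t-s+a)^{-\nu}$ times a positive definite kernel and then replace the weight $(t-s+a)^{-\nu}$ by a product of a function of $t$ and a function of $s$. Set $k(t):=(t+a)^{\nu}h(t)$, so $k$ is a (strongly) positive definite kernel. We must show, for every $u\in L^2_{loc}(0,+\infty;H)$ and $t\ge 0$,
\begin{align*}
I(t):=\int_0^t (s+a+b)^{\nu}\Big\langle \int_0^s h(s-r)u(r)\,dr,\, u(s)\Big\rangle ds\ \ge 0 .
\end{align*}

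The first idea I would try is the substitution $w(s)=(s+a+b)^{\nu}u(s)$. This gives
\begin{align*}
I(t)=\int_0^t\Big\langle \int_0^s \frac{(s+a+b)^{\nu}}{(r+a+b)^{\nu}}\,h(s-r)\,w(r)\,dr,\,w(s)\Big\rangle ds .
\end{align*}
The factor $(s+a+b)^{\nu}/(r+a+b)^{\nu}$ is not a function of $s-r$ alone, so this does not reduce directly to $k$. Writing $h(s-r)=k(s-r)(s-r+a)^{-\nu}$, I need the mixed weight
\begin{align*}
m(s,r)=\frac{(s+a+b)^{\nu}}{(r+a+b)^{\nu}(s-r+a)^{\nu}},\qquad 0\le r\le s,
\end{align*}
to be representable as a superposition of separable positive weights $\phi(s)\phi(r)$. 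That would turn $I(t)$ into an integral of quadratic forms of $k$ tested against $\phi w$, each of which is nonnegative.

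The cleaner route, which I would actually pursue, goes through Proposition \ref{2P3}. Split the weight as
\begin{align*}
(s+a+b)^{\nu}=\Big(\frac{s+a+b}{s+a}\Big)^{\nu}(s+a)^{\nu}.
\end{align*}
The factor $\psi(s)=\big((s+a+b)/(s+a)\big)^{\nu}$ is decreasing and positive. If $a=0$ it is unbounded near $0$, which is exactly why $b>0$ is needed; there I would replace $s+a$ by $s+a+\varepsilon$ or handle the endpoint separately. By Proposition \ref{2P3}, it then suffices to show that $h$ is a $(t+a)^{\nu}$-positive definite kernel, possibly shifted by a small $\varepsilon$.

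This step, passing from "$(t+a)^{\nu}h$ is positive definite" to "$h$ is $(t+a)^{\nu}$-positive definite", is the main obstacle: the weight $(s+a)^{\nu}$ sits on the outer variable, while the hypothesis puts $(s-r+a)^{\nu}$ on the kernel. I would try a Bernstein/Laplace-type representation,
\begin{align*}
(x+c)^{-\nu}=\Gamma(\nu)^{-1}\int_0^\infty \lambda^{\nu-1}e^{-\lambda(x+c)}\,d\lambda ,
\end{align*}
applied to $(s-r+a)^{-\nu}$. The exponential factors $e^{-\lambda(s-r)}=e^{-\lambda s}e^{\lambda r}$ are separable. Combined with Proposition \ref{2F-P1}, this turns each $\lambda$-slice into an $e^{\kappa t}$-weighted positive definiteness statement for $k$.

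The remaining ratio $(s+a+b)^{\nu}/(r+a+b)^{\nu}$ must then be absorbed by choosing the test functions and using monotonicity (Proposition \ref{2P3}) in each slice. If exact separability fails, I would instead argue at the level of Fourier/Laplace symbols: approximate $u$ by compactly supported smooth functions and use the Plancherel characterization $\mathrm{Re}\,\hat k(i\omega)\ge0$.

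In the strong case, the same argument applies to $h-\delta' e^{-Nt}$, after checking that $(t+a)^{\nu}\delta' e^{-Nt}$ is dominated by $\delta e^{-N't}$ for suitable constants.
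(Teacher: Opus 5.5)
The paper does not prove this proposition (it is recalled from the earlier GPDK paper), so your argument has to stand on its own. As written it has a genuine gap: the one hard step is never carried out. Reducing via Proposition~\ref{2P3} to ``$h$ is $(t+a)^{\nu}$-positive definite'' is legitimate, but it leaves the difficulty untouched, and the Laplace slicing you propose for that step fails.

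With $(s-r+a)^{-\nu}=\Gamma(\nu)^{-1}\int_0^\infty\lambda^{\nu-1}e^{-\lambda a}e^{-\lambda s}e^{\lambda r}\,d\lambda$ and $v_\lambda(r)=e^{\lambda r}u(r)$, the $\lambda$-slice is $\int_0^t(s+a)^{\nu}e^{-2\lambda s}\langle k*v_\lambda(s),v_\lambda(s)\rangle\,ds$. Proposition~\ref{2P3} covers this slice only when $(s+a)^{\nu}e^{-2\lambda s}$ is nonincreasing, i.e.\ $\lambda\ge\nu/(2a)$. The measure $\lambda^{\nu-1}e^{-\lambda a}\,d\lambda$ charges every neighbourhood of $0$, and for small $\lambda$ the slice weight increases on $[0,\nu/(2\lambda)-a]$.

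Increasing weights really do destroy positivity. Take the positive definite kernel $k(t)=\cos t$. Then $\int_0^t w\langle k*v,v\rangle\,ds=\frac12\int_0^t w\,dF$ with $F(s)=\|\int_0^s\cos r\,v(r)\,dr\|^2+\|\int_0^s\sin r\,v(r)\,dr\|^2$. For $v\equiv e$ ($\|e\|=1$), $t=2\pi$ and $w$ strictly increasing, this equals $-\frac12\int_0^{2\pi}F\,w'\,ds<0$, because $F(2\pi)=0<F(s)$ for $0<s<2\pi$. So slices can be negative, and there is nothing to ``absorb'' slice by slice. The Plancherel fallback does not apply either: $\mathrm{Re}\,\hat k\ge0$ controls only the unweighted form, and multiplication by $(s+a+b)^{\nu}$ is not a convolution operator.

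The missing idea is to split the weight into terms $\phi(r)\chi(s)$ with $\chi/\phi$ positive and nonincreasing. This is your abandoned first idea, except that symmetric products $\phi(s)\phi(r)$ are more than you need. (Also, with $w=(\cdot+a+b)^{\nu}u$ the factor in front of $h(s-r)$ is $(r+a+b)^{-\nu}$, not the ratio you wrote.) The binomial series gives such a splitting. For $0\le r<s$ we have $0<\frac{r+b}{s+a+b}<1$, so
\[
\frac{(s+a+b)^{\nu}}{(s-r+a)^{\nu}}=\Big(1-\frac{r+b}{s+a+b}\Big)^{-\nu}=\sum_{n\ge0}c_n\frac{(r+b)^n}{(s+a+b)^n},\qquad c_n=\frac{\nu(\nu+1)\cdots(\nu+n-1)}{n!}>0.
\]
With $v_n(r)=(r+b)^nu(r)$ this gives
\[
\int_0^t(s+a+b)^{\nu}\langle h*u(s),u(s)\rangle\,ds=\sum_{n\ge0}c_n\int_0^t\frac{\langle k*v_n(s),v_n(s)\rangle}{(s+b)^n(s+a+b)^n}\,ds\ \ge0 .
\]
Each term is nonnegative by Proposition~\ref{2P3}, since $(s+b)^{-n}(s+a+b)^{-n}$ is positive, bounded and decreasing; this is where $b>0$ is used. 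Interchanging sum and integral is justified because the series of absolute values sums to $(s+a+b)^{\nu}|h(s-r)|\,\|u(r)\|\,\|u(s)\|$. That function is integrable on $\{0\le r\le s\le t\}$ since $h\in L^1_{loc}$ and $u\in L^2_{loc}$.

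Your strong-case sentence is also insufficient. Pointwise domination of $\delta'(t+a)^{\nu}e^{-Nt}$ by $\delta e^{-N_0t}$ says nothing about positive definiteness of the difference. Write $(t+a)^{\nu}h=k_0+\delta e^{-N_0t}$ with $k_0$ positive definite. What you need is that $\delta e^{-N_0t}-\delta'(t+a)^{\nu}e^{-Nt}$ be a positive definite kernel for some $\delta'>0$. This does hold for small $\delta'$, e.g.\ by comparing real parts of the Laplace transforms on $i\mathbb{R}$, but it has to be checked. Then $(t+a)^{\nu}(h-\delta'e^{-Nt})$ is a sum of two positive definite kernels, and the non-strong result applied to $h-\delta'e^{-Nt}$ finishes the proof.
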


\begin{proposition}\label{2NP5}
Assume that ~$h(t)\in L^{1}_{loc}(0,\infty)$~ is a strongly $\varphi(t)$-positive definite kernel. Let $\varphi'(t)\leq N\varphi(t)$. Then for ~$t\geq 0$, $u(t)\in L^{2}_{loc}(0,\infty; H)$~ and
~$u'\in L^{1}_{loc}(0,\infty; H)$~, we have
\begin{eqnarray*}\label{2-4}
\nonumber\int_{0}^{t}\varphi(s)\|u(s)\|^{2}ds&\leq & C\|u(0)\|^{2}+\frac{2N}{\delta}\int_{0}^{t}\varphi(s)\left\langle h\ast u(s), u(s)\right\rangle ds\\[0.15cm]
&&+\frac{4}{N\delta}\int_{0}^{t}\varphi(s)\left\langle h\ast u'(s), u'(s)\right\rangle ds,
\end{eqnarray*}
where ~$\delta$~ is the constant in Definition~\autoref{2D2}~.
\end{proposition}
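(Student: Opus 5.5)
Write $h=h_0+\delta e^{-Nt}$, where $\delta,N$ are the constants from Definition~\ref{2D2}, so that $h_0:=h-\delta e^{-Nt}$ is a $\varphi$-positive definite kernel. The idea is to get a lower bound for $\int_0^t\varphi\langle h*u,u\rangle$ and $\int_0^t\varphi\langle h*u',u'\rangle$ through the explicit exponential part, and then express $\int_0^t\varphi\|u\|^2$ in terms of those exponential terms. Since $h_0$ is $\varphi$-positive definite,
\[
\int_0^t\varphi\langle h*u,u\rangle\,ds\ \ge\ \delta\int_0^t\varphi(s)\langle w(s),u(s)\rangle\,ds,\qquad w(s):=\int_0^s e^{-N(s-r)}u(r)\,dr,
\]
and analogously for $u'$, with $z(s):=\int_0^s e^{-N(s-r)}u'(r)\,dr$. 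So it suffices to prove
\[
\int_0^t\varphi\|u\|^2\le C\|u(0)\|^2+2N\int_0^t\varphi\langle w,u\rangle+\frac4N\int_0^t\varphi\langle z,u'\rangle .
\]

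\textbf{Step 1: the two exponential terms.} We have $w'=u-Nw$ and $z'=u'-Nz$. Integrating by parts in $r$ in the definition of $z$ gives
\[
z=u-e^{-Ns}u(0)-Nw, \qquad\text{i.e.}\qquad u=z+Nw+e^{-Ns}u(0).
\]
For the first term, $\langle w,u\rangle=\langle w,w'+Nw\rangle=\tfrac12\frac{d}{ds}\|w\|^2+N\|w\|^2$. For the second, $\langle z,u'\rangle=\tfrac12\frac{d}{ds}\|z\|^2+N\|z\|^2$.

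\textbf{Step 2: handling the weight.} Multiply these identities by $\varphi$ and integrate by parts. The boundary terms at $t$ are nonnegative, and $w(0)=z(0)=0$, so those at $0$ vanish. The remaining term is $-\tfrac12\int_0^t\varphi'\|w\|^2$. Using $\varphi'\le N\varphi$ it is bounded below by $-\tfrac N2\int_0^t\varphi\|w\|^2$. This yields
\[
\int_0^t\varphi\langle w,u\rangle\ge\frac N2\int_0^t\varphi\|w\|^2,\qquad \int_0^t\varphi\langle z,u'\rangle\ge\frac N2\int_0^t\varphi\|z\|^2 .
\]

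\textbf{Step 3: recovering $\|u\|^2$.} From $u=z+Nw+e^{-Ns}u(0)$,
\[
\|u\|^2\le 2\|z\|^2+4N^2\|w\|^2+4e^{-2Ns}\|u(0)\|^2 .
\]
Multiplying by $\varphi$ and integrating, Step 2 gives
\[
2\int_0^t\varphi\|z\|^2\le\frac4N\int_0^t\varphi\langle z,u'\rangle,\qquad 4N^2\int_0^t\varphi\|w\|^2\le 8N\int_0^t\varphi\langle w,u\rangle .
\]
To match the stated constants, I would split the inequality differently, e.g. $\|u\|^2\le(1+\epsilon)\|z+Nw\|^2+\dots$, or absorb part of the $\|u\|^2$ mass directly from $\langle w,u\rangle$. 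Since the statement's constants $2N/\delta$ and $4/(N\delta)$ fix the weighting, some fine-tuning of these splittings is needed.

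\textbf{Main obstacle.} The real difficulty is the boundary term $4\|u(0)\|^2\int_0^t\varphi(s)e^{-2Ns}\,ds$. It is bounded by $C\|u(0)\|^2$ only if $\varphi e^{-2Ns}$ is integrable. The hypothesis $\varphi'\le N\varphi$ gives $\varphi(s)\le\varphi(0)e^{Ns}$, so the integrand is at most $\varphi(0)e^{-Ns}$, which is integrable. Thus $C$ depends on $\varphi(0)$ and $N$, and this must be checked carefully, taking $\varphi(0^+)$ finite by local boundedness. Last, divide by $\delta$ after using the lower bounds from the first paragraph.
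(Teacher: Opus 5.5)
Your reduction to the exponential part of $h$, the identity $u=z+Nw+e^{-Ns}u(0)$, and the estimate $\int_0^t\varphi\langle z,u'\rangle\,ds\ge\frac N2\int_0^t\varphi\|z\|^2\,ds$ are all correct. What you call the main obstacle is routine: $\varphi(s)\le\varphi(0)e^{Ns}$ gives $\int_0^\infty\varphi(s)e^{-2Ns}\,ds\le\varphi(0)/N$.

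The real gap is Step 3. As written, it yields the coefficient $\frac{8N}{\delta}$ instead of the stated $\frac{2N}{\delta}$. The repair you propose, rebalancing a Young-type splitting of $z+Nw$, cannot close it. Suppose the only information you use about $w$ and $z$ is the pair of Step 2 lower bounds. Then reaching the coefficients $2N$ and $\frac4N$ requires the pointwise inequality $\|z+Nw\|^2\le 2\|z\|^2+N^2\|w\|^2$, plus some slack to absorb the $u(0)$ term. That inequality is false: for $z=Nw$ it reads $4N^2\|w\|^2\le 3N^2\|w\|^2$. In general, $\|x+y\|^2\le a\|x\|^2+b\|y\|^2$ for all $x,y$ forces $(a-1)(b-1)\ge 1$, and $a=2$, $b=1$ violates this.

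The missing step is your second, unexecuted suggestion, done exactly. Do not split $z+Nw$ at all; instead complete the square against the $\langle w,u\rangle$ term. Since $u-Nw=w'=z+e^{-Ns}u(0)$, you have pointwise
\[
\|u\|^2=2N\langle w,u\rangle+\|u-Nw\|^2-N^2\|w\|^2\le 2N\langle w,u\rangle+2\|z\|^2+2e^{-2Ns}\|u(0)\|^2 .
\]
Multiply by $\varphi$, integrate, and use Step 2 only for $z$, so that $2\int_0^t\varphi\|z\|^2\le\frac4N\int_0^t\varphi\langle z,u'\rangle$. This gives
\[
\int_0^t\varphi\|u\|^2\le\frac{2\varphi(0)}{N}\|u(0)\|^2+2N\int_0^t\varphi\langle w,u\rangle+\frac4N\int_0^t\varphi\langle z,u'\rangle .
\]
Then $\delta\int_0^t\varphi\langle w,u\rangle\le\int_0^t\varphi\langle h*u,u\rangle$, and likewise for $z$, produces exactly $\frac{2N}{\delta}$ and $\frac{4}{N\delta}$. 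The Step 2 estimate for $w$ is not needed.

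Your weaker inequality would suffice for the later use in Lemma~\ref{4LH-1}, where the constants play no role. It does not, however, prove Proposition~\ref{2NP5} as stated.
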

Therefore, the weighted integral of $\|u(s)\|^{2}$ can be controlled by two inner product of a GPDK function $h$ with $u$ and $u'$. This provides us with a good way to estimate the potential energy of the problem \eqref{NI}-\eqref{NVI}.

\begin{proposition}\label{2EX-00}
\begin{enumerate}
\item[{\rm (i)}]
 If $h(t)$ is a (strongly) $e^{bt}$-positive definite kernel, then $h(t)\cos at$ $(a\in \mathbb{R})$ is a (strongly) $e^{bt}$-positive definite kernel.

\item[{\rm (ii)}] If $(t+1)^{b}h(t)$ is a (strongly) positive definite kernel, then $h(t)\cos at$ $(a\in \mathbb{R})$ is a (strongly) $(t+2)^{b}$-positive definite kernel.
\end{enumerate}
\end{proposition}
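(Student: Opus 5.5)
The plan is to reduce both parts to the unweighted statement "if $k$ is a (strongly) positive definite kernel, then $k(t)\cos at$ is a (strongly) positive definite kernel". I would then transport this through the weights using Proposition \ref{2F-P1} for (i) and Proposition \ref{2NP2} for (ii).

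\emph{Step 1 (unweighted case).} Let $k$ be positive definite and fix $u\in L^2_{loc}(0,+\infty;H)$. Write
\[
\cos a(t-s)=\cos at\cos as+\sin at\sin as .
\]
Then for each $s$,
\[
\langle (k\cos a\cdot)*u(s),u(s)\rangle=\langle k*(u\cos a\cdot)(s),\,u(s)\cos as\rangle+\langle k*(u\sin a\cdot)(s),\,u(s)\sin as\rangle .
\]
Integrating over $[0,t]$, each term is nonnegative because $k$ is positive definite and applied to $u\cos a\cdot$ and $u\sin a\cdot$, which lie in $L^2_{loc}$. So $k\cos a\cdot$ is positive definite.

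For the strong version, suppose $k-\delta e^{-Nt}$ is positive definite. Then
\[
k\cos at-\delta e^{-Nt}\cos at=(k-\delta e^{-Nt})\cos at
\]
is positive definite by the same argument. It then remains to check that $\delta e^{-Nt}\cos at-\delta' e^{-N't}$ is positive definite for suitable $\delta'>0$ and $N'>0$; adding the two kernels gives the claim, since sums of positive definite kernels are positive definite.

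\emph{Step 2 (main obstacle: the explicit kernel).} I would verify this through the standard frequency criterion. A kernel $\ell\in L^1$ is positive definite when $\mathrm{Re}\,\hat\ell(i\omega)\ge0$. Here
\[
\mathrm{Re}\int_0^\infty e^{-Nt}\cos at\, e^{-i\omega t}dt=\frac12\Big[\frac{N}{N^2+(\omega-a)^2}+\frac{N}{N^2+(\omega+a)^2}\Big],
\]
which is bounded below by $c\,N/(N'^2+\omega^2)$ for some $c>0$, taking $N'$ large, say $N'\ge N+|a|$. Choosing $\delta'=c\delta N'/N$ then makes the real part of the transform of the difference nonnegative. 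This transform computation is the only delicate point.

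\emph{Step 3 (weights).} For (i), Proposition \ref{2F-P1} says $h$ being (strongly) $e^{bt}$-positive definite is equivalent to $e^{bt/2}h$ being (strongly) positive definite. By Step 1, $e^{bt/2}h(t)\cos at$ is then (strongly) positive definite, and applying Proposition \ref{2F-P1} in the other direction gives (i).

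For (ii), the hypothesis says $(t+1)^b h$ is (strongly) positive definite. Step 1 makes $(t+1)^b h(t)\cos at$ (strongly) positive definite. Proposition \ref{2NP2} with $a=1$ and $b=1$ then yields that $h\cos a\cdot$ is (strongly) $(t+2)^b$-positive definite.

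There is one subtlety in Step 3 for (i): the strong versions must match under the exponential conjugation. Strongly positive definite for $e^{bt/2}h\cos a\cdot$ gives, after multiplying by $e^{-bt/2}$, the exponential $\delta e^{-(N+b/2)t}$. That is still of the required form as long as $N+b/2>0$, which holds after enlarging $N$ if necessary.
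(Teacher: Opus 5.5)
The paper gives no proof of Proposition~\ref{2EX-00}. It only recalls the result from \cite{JLXGPDK}, so there is no in-text argument to compare with, and your proposal has to stand on its own. It does: the argument is correct and self-contained.

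The product-to-sum splitting in Step~1 carries the proof, and it does not depend on the weight. For any $\varphi$,
\[
\int_0^t\varphi(s)\langle (k\cos a\cdot)*u(s),u(s)\rangle\,ds=\sum_{j=1}^{2}\int_0^t\varphi(s)\langle k*w_j(s),w_j(s)\rangle\,ds,\qquad w_1=u\cos a\cdot,\ w_2=u\sin a\cdot .
\]
So the non-strong assertions hold directly for every weight. Propositions~\ref{2F-P1} and~\ref{2NP2} are needed only to handle the exponential correction in the strong case.

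Your ordering in (ii) is the right one. You apply the cosine step to the unweighted kernel $(t+1)^b h$ first and only then invoke Proposition~\ref{2NP2}. This lets you check positivity of $\delta e^{-Nt}\cos at-\delta' e^{-N't}$ with the plain Fourier criterion, with no polynomial weight involved.

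Your transform bound is right. For $N'\ge N+|a|$ you have $N^2+(\omega\mp a)^2\le 2(N'^2+\omega^2)$, so $c=\tfrac12$ works. The criterion itself, $\mathrm{Re}\,\hat\ell(i\omega)\ge 0$ implies positive type, applies because the difference kernel is in $L^1$, via truncation of $u$ to $[0,t]$ and Plancherel.

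Two small remarks:
\begin{itemize}
\item Proposition~\ref{2NP2} requires the exponent to be positive ($\nu\in\mathbb{R}^+$). In (ii) you are therefore tacitly assuming $b>0$, which is the intended setting.
\item Your last paragraph is redundant, since Proposition~\ref{2F-P1} as stated already covers the strong versions. If you want to justify ``enlarging $N$'' anyway, it is just Step~2 with $a=0$: for $N'\ge N$, the real part of the transform of $\delta e^{-Nt}-(\delta N/N')e^{-N't}$ is $\delta N\big[(N^2+\omega^2)^{-1}-(N'^2+\omega^2)^{-1}\big]\ge 0$.
\end{itemize}
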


The following is some examples of GPDK (details see \cite{JLXGPDK}).

\begin{example}\label{2EX1}{\em
\begin{enumerate}
\item[{\rm (i)}] If $g(t)>0$ and $$g'(t)\leq -k_{0}g(t) \quad (k_{0}>0),$$ then $\int_{t}^{+\infty}g(s)ds$ is a strongly $e^{2kt}$-positive definite kernel, where $0\leq k\leq\frac{k_{0}}{2}$.

\item[{\rm (ii)}] If $g(t)>0$ and $$g'(t)\leq -\frac{k_{0}}{t+1}g(t) \quad  (k_{0}>1),$$ then $\int_{t}^{+\infty}g(s)ds$ is a strongly $(t+2)^{\frac{k}{2}}$-positive definite kernel, where $$0<k<\min \{k_{0}-1, \frac{k_{0}}{2}\},$$ and if $k_{0}>2$, $k$ can take $\frac{k_{0}}{2}$.
\end{enumerate}}
\end{example}

\begin{example}\label{NEX1}{\em
\begin{enumerate}
\item[{\rm (i)}] If $h(t)=e^{-bt}\cos at$, then $h(t)$ is a strongly $e^{2ct}$-positive definite kernel, where $0\leq c< b$.

\item[{\rm (ii)}]  If $h(t)=(t+1)^{-b}\cos at$ ($b>0$), then $h(t)$ is a strongly $(t+2)^{c}$-positive definite kernel, where $0\leq c< b$.
\end{enumerate}}
\end{example}

\begin{example}
{\em
\begin{enumerate}
\item[{\rm (i)}] If $$g(t)=be^{-bt}\cos at+ae^{-bt}\sin at,$$ then $\int_{t}^{+\infty}g(s)ds$ is a strongly $e^{2ct}$-positive definite kernel, where $0\leq c< b$.

\item[{\rm (ii)}]  If $$g(t)=b(t+1)^{-b-1}\cos at+a(t+1)^{-b}\sin at \quad (b>0),$$ then $\int_{t}^{+\infty}g(s)ds$ is a strongly $(t+2)^{c}$-positive definite kernel, where $0< c< b$.
\end{enumerate}}
\end{example}

From the definition of GPDK and the above examples, we see the GPDK may be oscillating or sign-varying kernels, which is a significant difference from the kernels studied in the past literature.

\vspace{3mm} \noindent

\section{Polynomial decay property and integrability of the energy for the system \eqref{NI}-\eqref{NVI}}

\subsection{Assumptions and polynomial decay theorem}
Firstly, we strengthen the hypothesis ${\rm(I_3)}$ in Section 3 and append a new assumption ${\rm(I_4)}$.
{\rm
\begin{enumerate}
\item[${\rm(I_3')}$] $g$  satisfies
\begin{equation*}\label{6-1}
  \int_0^{+\infty}g(t)dt <1,\quad \int_0^{+\infty}g(t)dt+a_{1}\left(\alpha-\beta_{1}^2\right)<1,
\end{equation*}
$$(t+1)^{\lambda_{0}}g(t)\in L^{1}(0,+\infty),\quad (t+1)^{\lambda_{0}}g'(s)\in L^{1}(0, +\infty),$$
and
\begin{equation}\label{6pos}
 t\mapsto\int_{t}^{+\infty}g(s)ds  {\rm~is~a~strongly}~ (t+1)^{\lambda_{0}}- {\rm positive~ definite~kernel};
\end{equation}
here $~\lambda_{0}\ge 1$ is a positive constant.

\item[${\rm(I_4)}$] There exists a nonnegative functional $P$ on $B_{1}\mathcal{D}(A_{1})\bigcup B_{2}\mathcal{D}(A_{2}),$ and two bounded linear operators $\Lambda_i$ ($i=1,2$) on $H$ such that for $u\in \mathcal{D}(A_{1})$, $v\in \mathcal{D}(A_{2})$,
    \begin{eqnarray}\label{4-2}
     |\langle A_{1}u, B_{2}v\rangle-\langle B_{1}u, A_{2}v\rangle|\leq D_{1}\left(\left\|B_{1}u\right\|\left\|B_{2}v\right\|+P(B_{1}u)P(B_{2}v)\right),
    \end{eqnarray}
and for $u\in\mathcal{D}(A_{i})$, $i=1,2$
\begin{eqnarray}\label{4-3}
  &&P^{2}(B_{i}u)\leq\left\langle A_{i}u,\Lambda_{i} B_{i}u\right\rangle +D_{2}\left\|B_{i}u\right\|^{2},\label{4-3-1}\\[0.15cm]
  && \left\langle u, \Lambda_{i} B_{i}u\right\rangle\leq D_{2}\|u\|^{2},\label{4-3-2}
\end{eqnarray}
where $D_{1}$ and $D_{2}$ are positive constants.
\end{enumerate}
}

Note that \eqref{6pos} implies, according to Proposition \ref{2P3}, that $\displaystyle t\mapsto\int_{t}^{+\infty}g(s)ds$ is a strongly positive definite kernel. Therefore, we have Theorem \ref{3T1}. Now we give the polynomial decay theorem.
\begin{theorem}\label{6T2}
Assume that ${\rm(I_1)}$, ${\rm(I_2)}$, ${\rm(I^{'}_3)}$ and ${\rm(I_4)}$ hold. Then, for any $u_0\in \mathcal{D}(A_{1})$, ~$v_0\in \mathcal{D}(A_{2})$,  ~$u_1\in
 \mathcal{D}( \sqrt{A_{1}})$ and $v_1\in
 \mathcal{D}( \sqrt{A_{2}})$, the solution energy of \eqref{NI}-\eqref{NVI} satisfies
\begin{eqnarray}
E(t)\leq C(t+1)^{-\lambda_{0}},~~t\ge 0,
\end{eqnarray}
furthermore, we have, for $t\ge 0$,
\begin{eqnarray}
\int_{0}^{+\infty}(t+1)^{\min\{\lambda_{0}, 2(\lambda_{0}-1)\}}E(t)dt&\leq&C.
\end{eqnarray}
Here $C>0$ is a constant.
\end{theorem}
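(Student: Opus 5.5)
We would prove Theorem \ref{6T2} with a weighted multiplier (Lyapunov) argument. The weight $\varphi(t)=(t+1)^{\lambda_0}$ and Proposition \ref{2NP5} replace the usual pointwise hypotheses on $g$ (nonnegativity, monotonicity). By Theorem \ref{reg}(ii) it suffices to work with the regular solutions of Theorem \ref{reg}(i) and pass to the limit at the end. Set $G(t)=\int_t^{+\infty}g(s)ds$. Using $\int_0^t g(t-s)A_1u(s)ds = (G(0)-G(t))A_1u(t) - G*A_1u'(t)$ (after integration by parts) and the symmetry in ${\rm(I_2)}$, one obtains the energy identity
\[
E'(t)=-\langle G*\sqrt{A_1}u'(t),\sqrt{A_1}u'(t)\rangle+\text{(boundary term in }G(t)\langle \sqrt{A_1}u_0,\sqrt{A_1}u'(t)\rangle).
\]
The boundary term is handled by the integrability $(t+1)^{\lambda_0}g\in L^1$. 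Integrating against $(t+1)^{\lambda_0}$ and using \eqref{6pos} together with Proposition \ref{2P3} yields the basic dissipation bound
\[
\int_0^T (t+1)^{\lambda_0}\langle G*\sqrt{A_1}u',\sqrt{A_1}u'\rangle\,dt\le C E(0)+C\int_0^T(t+1)^{\lambda_0-1}E(t)\,dt .
\]
Applying Proposition \ref{2NP5} to $\sqrt{A_1}u$ and $\sqrt{A_1}u'$, with the higher-order energy of $(u',v')$ controlling the second term, then gives weighted control of $\int(t+1)^{\lambda_0}\|\sqrt{A_1}u\|^2$ and $\int (t+1)^{\lambda_0}\|\sqrt{A_1}u'\|^2$.

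The next step transfers this dissipation to the full energy. We multiply \eqref{NI} by $(t+1)^{\lambda}u$ to recover $\|u'\|^2$ from $\|\sqrt{A_1}u\|^2$. The key cross multiplier is $(t+1)^{\lambda}B_1u$ in \eqref{NII} combined with $(t+1)^\lambda B_2v$ in \eqref{NI}, using $\beta\|\sqrt{A_2}v\|\le\|B_2v\|$ to produce $\int\|\sqrt{A_2}v\|^2$. The commutator-type term $\langle A_1u,B_2v\rangle-\langle B_1u,A_2v\rangle$, which vanishes only when the wave speeds are equal, is bounded via \eqref{4-2}. The resulting quantity $P(B_1u)P(B_2v)$ is absorbed using \eqref{4-3-1}-\eqref{4-3-2}: we multiply by the auxiliary multipliers $\Lambda_iB_iu$ (respectively $\Lambda_2B_2v$) and integrate by parts in time, so that $P^2(B_1u)$ is dominated by $u$-terms that are already dissipated. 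Finally, multiplying \eqref{NII} by $(t+1)^\lambda v$ recovers $\|v'\|^2$. Each such step costs one derivative, or equivalently uses the energy of $(u',v')$, and loses one power of the weight.

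We expect the main obstacle to be this indirect transfer. Without equal wave speeds, the $P$-terms force the use of higher-order energies, which is where the loss from $\lambda_0$ to $2(\lambda_0-1)$ appears. We would set up the estimates inductively on the weight exponent $k=0,1,\dots,\lambda_0$, in the form
\[
\int_0^T(t+1)^{k}(E_{u,v}+E_{u',v'})\,dt\le C\big(E_{u,v}(0)+E_{u',v'}(0)\big)+C\int_0^T(t+1)^{k-1}(\cdots)\,dt,
\]
starting from the unweighted integrability of $E$, which follows from the $k=0$ case and Proposition \ref{2NP5}. The lower-order weighted terms are closed by interpolation and Young's inequality against the dissipation.

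Once $\int_0^\infty(t+1)^{\lambda_0-1}E\,dt<\infty$ is established, the decay estimate follows. Differentiating $(t+1)^{\lambda_0}E(t)$ and using $E'\le$ (dissipation plus the $g$ boundary term) gives $(t+1)^{\lambda_0}E(t)\le C$. Feeding this decay back into the weighted inequality then improves the integrability to the exponent $\min\{\lambda_0,2(\lambda_0-1)\}$.
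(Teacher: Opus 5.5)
\textbf{There is a genuine gap in the transfer step.} Your overall architecture is the right one: weighted multipliers, Proposition \ref{2NP5} fed by the energies of $(u,v)$ and $(u',v')$, the $\Lambda_iB_i$ multipliers for ${\rm(I_4)}$, and induction on the weight. But the step your transfer argument rests on is false. Proposition \ref{2NP5} applied to $w=\sqrt{A_1}u$ needs a bound on $\int_0^t(s+1)^{\lambda}\langle G\ast\sqrt{A_1}u,\sqrt{A_1}u\rangle\,ds$, and no identity of the system produces this quantity. The energy identity for $(u,v)$ only yields the form in $\sqrt{A_1}u'$, and the one for \eqref{DI}-\eqref{DII} only yields the form in $\sqrt{A_1}u''$. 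So the only directly dissipated quantity is $\int_0^t(s+1)^{\lambda}\|\sqrt{A_1}u'\|^2ds$ (Lemma \ref{4LH-1}). This gives $\|u'\|^2\le a_1\|\sqrt{A_1}u'\|^2$ for free, while $\|\sqrt{A_1}u\|^2$ is not dissipated at all. The same error affects your base case: Proposition \ref{2NP5} at $k=0$ gives integrability of $\|\sqrt{A_1}u'\|^2$, not of $E$.

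As a result, your transfer at the level of $(u,v)$ does not close. The cross multipliers $B_1u$, $B_2v$ and the bound \eqref{4-3-1} for $P^2(B_1u)$ leave $\|B_1u\|^2\sim\|\sqrt{A_1}u\|^2$ on the right-hand side. The time integration by parts in $\langle A_2v,\Lambda_2B_2v\rangle=-\langle v''+B_1u,\Lambda_2B_2v\rangle$ leaves $\langle v',\Lambda_2B_2v'\rangle\le D_2\|v'\|^2$, which is not controlled either. Your multiplier $u$ in \eqref{NI} also runs in the wrong direction: the kinetic energy needs no recovery, the potential energies do.

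\textbf{How the paper closes the argument.} It performs the entire transfer on the differentiated system. It uses the functional $F$ in \eqref{6-22}, built from $u''$, $B_2v'$, $B_1u'$, $v''$ with a memory correction, together with ${\rm(I_4)}$ applied to $B_1u'-g\ast B_1u'$ and $B_2v'$. This gives $\int(s+1)^\lambda\|\sqrt{A_2}v'\|^2$ with the following error terms:
\begin{itemize}
\item on the $u$-side, $\|\sqrt{A_1}u'\|^2$, which is dissipated;
\item on the $u$-side, $\|u''\|^2$, handled by the multiplier $u'$ in \eqref{DI};
\item on the $v$-side, $\|v''\|^2$ with coefficient $\frac18$, closed against $\|\sqrt{A_2}v'\|^2$ by the multiplier $v'$ in \eqref{DII}, see \eqref{V}.
\end{itemize}
This yields \eqref{6-31} for every $\lambda\le\lambda_0$, so the $P$-terms cost no weight.

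Only afterwards are the potential energies recovered, by the combined multiplier $(u,v)$ and the coercivity \eqref{3-10-1} (Lemma \ref{4LN-1}). There the memory term $\int(s+1)^\lambda\|G\ast\sqrt{A_1}u'\|^2$ can be bounded by $C\int(s+1)^\lambda\|\sqrt{A_1}u'\|^2$ only when $\lambda\le 2(\lambda_0-1)$, via Cauchy--Schwarz with $(t+1)^{\lambda_0-1}G\in L^1$. This, not the $P$-terms, is the origin of the exponent $\min\{\lambda_0,2(\lambda_0-1)\}$. It is also where $\lambda_0\ge1$ enters the decay proof, since the induction inserts the weight-$(\lambda-1)$ version of \eqref{6-19} into Lemma \ref{4LH-1}. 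Your closing remark about ``feeding the decay back'' supplies no mechanism producing this exponent, so the integrability part of the theorem remains unproved in your plan.
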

The proof of this polynomial decay theorem is not short. So we first prove some lemmas in the following subsection 3.2 and then present a complete proof of Theorem \ref{6T2} by virtue of these lemmas in subsection 3.3 below.

\subsection{Lemmas and proofs}

The following notations will be used.
\begin{eqnarray*}
G_{\lambda}=\int_{0}^{+\infty}(t+1)^{\lambda}|g(t)|dt, ~~\mbox{for}~ \lambda >0;~~~~  G(t)=\int_{t}^{+\infty}g(s)ds,
~~\mbox{for}~t\ge 0.
\end{eqnarray*}

\begin{lemma}\label{6N-1}
Let ${\rm (I_{3}')}$ hold. Then, for any $0\leq\lambda\leq\lambda_{0}$,
\begin{eqnarray}\label{6-10}
\nonumber& &(t+1)^{\lambda}E(t)+\int^{t}_{0}(s+1)^{\lambda}\left\langle\int^{s}_{0}G(s-\tau)\sqrt{A_{1}}u'(\tau)d\tau, \sqrt{A_{1}}u'(s)\right\rangle ds\\[0.15cm]
&\le&CE(0)+C\lambda\int^{t}_{0}(s+1)^{\lambda-1}E(s)ds, ~~t\ge 0.
\end{eqnarray}
Here $C> 0$ is a constant.
\end{lemma}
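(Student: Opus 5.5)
The plan is to derive an energy identity in which the memory term appears in the form $\langle G*\sqrt{A_1}u',\sqrt{A_1}u'\rangle$. We then multiply by $(t+1)^\lambda$ and integrate. By Theorem \ref{reg}(ii) it suffices to treat regular solutions, with $(u_0,v_0)\in\mathcal{D}_0$ and $(u_1,v_1)\in\mathcal{D}(A_1)\times\mathcal{D}(A_2)$; the general case then follows by approximation, since every term in \eqref{6-10} is continuous under the convergences listed there.

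\textbf{Rewriting the equation.} Since $G'=-g$ and $G(0)=\int_0^\infty g$, integration by parts gives
\[
\int_0^t g(t-s)A_1u(s)\,ds=G(0)A_1u(t)-G(t)A_1u_0-\int_0^t G(t-s)A_1u'(s)\,ds .
\]
Hence \eqref{NI} becomes
\[
u''+(1-G(0))A_1u+\int_0^tG(t-s)A_1u'(s)\,ds+G(t)A_1u_0+\alpha u+B_2v=0 .
\]
Take the inner product of this with $u'$, take the inner product of \eqref{NII} with $v'$, and add. The symmetry $\langle B_1u,v'\rangle+\langle B_2v,u'\rangle=\frac{d}{dt}\langle B_1u,v\rangle$ from ${\rm(I_2)}$ then yields
\[
E'(t)+\bigl\langle (G*\sqrt{A_1}u')(t),\sqrt{A_1}u'(t)\bigr\rangle=-G(t)\bigl\langle \sqrt{A_1}u_0,\sqrt{A_1}u'(t)\bigr\rangle .
\]

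\textbf{Weighting and integrating.} Multiply this identity by $(s+1)^\lambda$ and integrate over $[0,t]$. The term $\int_0^t (s+1)^\lambda E'(s)\,ds$ equals
\[
(t+1)^\lambda E(t)-E(0)-\lambda\int_0^t(s+1)^{\lambda-1}E(s)\,ds .
\]
This produces the left-hand side of \eqref{6-10} together with the $C\lambda\int_0^t(s+1)^{\lambda-1}E$ term. The only remaining piece is the forcing term
\[
R(t)=-\int_0^t(s+1)^\lambda G(s)\bigl\langle\sqrt{A_1}u_0,\sqrt{A_1}u'(s)\bigr\rangle\,ds .
\]

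\textbf{The main obstacle: bounding $R(t)$.} Here $\sqrt{A_1}u'$ is not controlled by the energy, so we integrate by parts in time to move the derivative off $u'$:
\[
R(t)=-(t+1)^\lambda G(t)\bigl\langle\sqrt{A_1}u_0,\sqrt{A_1}u(t)\bigr\rangle+G(0)\|\sqrt{A_1}u_0\|^2+\int_0^t\bigl[(s+1)^\lambda G(s)\bigr]'\bigl\langle\sqrt{A_1}u_0,\sqrt{A_1}u(s)\bigr\rangle\,ds .
\]

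\emph{Boundary term.} From ${\rm(I_3')}$ we get $|G(t)|\le\int_t^\infty|g|\le (t+1)^{-\lambda_0}G_{\lambda_0}$. Hence $(t+1)^\lambda|G(t)|\le C$ for $\lambda\le\lambda_0$. By Young's inequality and Theorem \ref{3T1},
\[
(t+1)^\lambda|G(t)|\,\|\sqrt{A_1}u_0\|\,\|\sqrt{A_1}u(t)\|\le C E(0)+\epsilon\,(t+1)^\lambda E(t) .
\]
We can use either the bound $E(t)\le CE(0)$ or absorb the second term into the left-hand side; the first option is simpler.

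\emph{Integral term.} The weight satisfies
\[
\bigl|[(s+1)^\lambda G(s)]'\bigr|\le \lambda(s+1)^{\lambda-1}|G(s)|+(s+1)^\lambda|g(s)|\le C(s+1)^{\lambda_0-1}\!\int_s^\infty|g|+(s+1)^{\lambda_0}|g(s)| .
\]
Both terms are integrable on $(0,\infty)$. For the first, $\int_0^\infty (s+1)^{\lambda_0-1}\int_s^\infty|g|\,ds\le\lambda_0^{-1}G_{\lambda_0}$ by Fubini; the second is integrable by ${\rm(I_3')}$. Combining this with $\|\sqrt{A_1}u(s)\|^2\le CE(s)\le CE(0)$ bounds the integral term by $CE(0)$. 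This gives \eqref{6-10}.

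The delicate point is precisely this treatment of the $G(t)A_1u_0$ term: it requires the integration by parts in time and the integrability $(t+1)^{\lambda_0}g\in L^1$. The positive-definiteness hypothesis \eqref{6pos} is not needed for \eqref{6-10} itself. It enters only through Theorem \ref{3T1}, which gives the uniform bound on $E$, and later, when the memory integral on the left of \eqref{6-10} is used to control energy via Proposition \ref{2NP5}.
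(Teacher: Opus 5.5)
Your proof is correct and follows essentially the same route as the paper. Both arguments form the energy identity with the memory term rewritten via $g*w=G(0)w-G(t)w(0)-G*w'$, weight it by $(t+1)^\lambda$, and integrate by parts in time on the forcing term $G(s)\langle\sqrt{A_1}u_0,\sqrt{A_1}u'(s)\rangle$, using that $(t+1)^\lambda G(t)$ is bounded, that $(t+1)^{\lambda-1}G$ and $(t+1)^\lambda g$ are integrable, and that $\|\sqrt{A_1}u(s)\|^2\le CE(0)$. The differences are cosmetic: you rewrite the equation before forming the identity, so $E$ appears as an exact derivative, and you make the regularization through Theorem \ref{reg}(ii) explicit, which makes the bookkeeping slightly cleaner than in the paper.
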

\begin{proof}
Multiplying \eqref{NI} and \eqref{NII} by $(t+1)^{\lambda}u'(t)$, $(t+1)^{\lambda}v'(t)$ respectively, we have
\begin{eqnarray}\label{6-4}
\nonumber& &\frac{1}{2}(t+1)^{\lambda}\frac{d}{dt}\left\|u'(t)\right\|^{2}+\frac{1}{2}(t+1)^{\lambda}\frac{d}{dt}\left\|v'(t)\right\|^{2}
+\frac{1}{2}(t+1)^{\lambda}\frac{d}{dt}\left\|\sqrt{A_{1}}u(t)\right\|^{2}\\[0.15cm]
\nonumber& & +\frac{1}{2}(t+1)^{\lambda}\frac{d}{dt}\left\|\sqrt{A_{2}}v(t)\right\|^{2}
+\frac{\alpha}{2}(t+1)^{\lambda}\frac{d}{dt}\left\|u(t)\right\|^{2}+(t+1)^{\lambda}\frac{d}{dt}\left\langle B_{2}v(t), u(t)\right\rangle\\[0.15cm]
&=&(t+1)^{\lambda}\left\langle\int^{t}_{0}g(t-s)\sqrt{A_{1}}u(s)ds, \sqrt{A_{1}}u'(t)\right\rangle.
\end{eqnarray}
Integrating  \eqref{6-4} from $0$ to $t$, we get
\begin{eqnarray}\label{6-5}
\nonumber& &\frac{1}{2}(t+1)^{\lambda}\left\|u'(t)\right\|^{2}+\frac{1}{2}(t+1)^{\lambda}\left\|v'(t)\right\|^{2}
+\frac{1}{2}(t+1)^{\lambda}\left\|\sqrt{A_{1}}u(t)\right\|^{2}\\[0.15cm]
\nonumber&&+\frac{1}{2}(t+1)^{\lambda}\left\|\sqrt{A_{2}}v(t)\right\|^{2}
+\frac{\alpha}{2}(t+1)^{\lambda}\left\|u(t)\right\|^{2}+(t+1)^{\lambda}\left\langle B_{2}v(t), u(t)\right\rangle\\[0.15cm]
\nonumber&\le&C_1E(0)+\int^{t}_{0}(s+1)^{\lambda}\left\langle\int^{s}_{0}g(s-\tau)\sqrt{A_{1}}u(\tau)d\tau, \sqrt{A_{1}}u'(s)\right\rangle ds\\[0.15cm]
\nonumber& &+\frac{1}{2}\int^{t}_{0}\lambda(s+1)^{\lambda-1}\bigg(\left\|u'(s)\right\|^{2}+\left\|v'(s)\right\|^{2}+\left\|\sqrt{A_{1}}u(s)\right\|^{2}
\\[0.15cm]
&&\quad+\left\|\sqrt{A_{2}}v(s)\right\|^{2}+\alpha\left\|u(s)\right\|^{2}+2\left\langle B_{2}v(s), u(s)\right\rangle\bigg)ds.
\end{eqnarray}
Clearly, for $t\ge 0,$
$$
g\ast u(t)=G(0)u(t)-G(t)u(0)-G\ast u'(t).$$
Therefore, we obtain
\begin{eqnarray}\label{6F-1}
\nonumber&&\int^{t}_{0}(s+1)^{\lambda}\left\langle\int^{s}_{0}g(s-\tau)\sqrt{A_{1}}u(\tau)d\tau, \sqrt{A_{1}}u'(s)\right\rangle ds\\[0.15cm]
\nonumber&&+\int^{t}_{0}(s+1)^{\lambda}\left\langle\int^{s}_{0}G(s-\tau)\sqrt{A_{1}}u'(\tau)d\tau, \sqrt{A_{1}}u'(s)\right\rangle ds\\[0.15cm]
\nonumber&=&\frac{G(0)}{2}(t+1)^{\lambda}\left\|\sqrt{A_{1}}u(t)\right\|^{2}+\frac{G(0)}{2}\left\|\sqrt{A_{1}}u(0)\right\|^{2}
\\[0.15cm]
\nonumber&&-G(t)(t+1)^{\lambda}\left\langle\sqrt{A_{1}}u(0), \sqrt{A_{1}}u(t)\right\rangle-\frac{G(0)}{2}\lambda\int_{0}^{t}(s+1)^{\lambda-1}\left\|\sqrt{A_{1}}u(s)\right\|^{2} ds\\[0.15cm]
\nonumber&&-\int_{0}^{t}g(s)(s+1)^{\lambda}\left\langle\sqrt{A_{1}}u(0), \sqrt{A_{1}}u(s)\right\rangle ds\\[0.15cm]
&&+\int_{0}^{t}G(s)\lambda(s+1)^{\lambda-1}\left\langle\sqrt{A_{1}}u(0), \sqrt{A_{1}}u(s)\right\rangle ds.
\end{eqnarray}
Putting \eqref{6F-1} into \eqref{6-5} yields that
\begin{eqnarray}\label{6F-2}
\nonumber& &\frac{1}{2}(t+1)^{\lambda}\bigg(\left\|u'(t)\right\|^{2}+\left\|v'(t)\right\|^{2}
+(1-G(0))\left\|\sqrt{A_{1}}u(t)\right\|^{2}+\left\|\sqrt{A_{2}}v(t)\right\|^{2}
+\alpha\left\|u(t)\right\|^{2}\\[0.15cm]
\nonumber&&+2\left\langle B_{2}v(t), u(t)\right\rangle\bigg)
+\int^{t}_{0}(s+1)^{\lambda}\left\langle\int^{s}_{0}G(s-\tau)\sqrt{A_{1}}u'(\tau)d\tau, \sqrt{A_{1}}u'(s)\right\rangle ds\\[0.15cm]
\nonumber&\le&C_1E(0)+\frac{1}{2}\int^{t}_{0}\lambda(s+1)^{\lambda-1}\bigg(\left\|u'(s)\right\|^{2}+\left\|v'(s)\right\|^{2}+\left\|\sqrt{A_{1}}u(s)\right\|^{2}
+\left\|\sqrt{A_{2}}v(s)\right\|^{2}\\[0.15cm]
\nonumber&&+\alpha\left\|u(s)\right\|^{2}+2\left\langle B_{2}v(s), u(s)\right\rangle\bigg)ds-G(t)(t+1)^{\lambda}\left\langle\sqrt{A_{1}}u(0), \sqrt{A_{1}}u(t)\right\rangle\\[0.15cm]
\nonumber&&-\int_{0}^{t}g(s)(s+1)^{\lambda}\left\langle\sqrt{A_{1}}u(0), \sqrt{A_{1}}u(s)\right\rangle ds\\[0.15cm]
&&+\int_{0}^{t}G(s)\lambda(s+1)^{\lambda-1}\left\langle\sqrt{A_{1}}u(0), \sqrt{A_{1}}u(s)\right\rangle ds.
\end{eqnarray}
Then, by Young inequality and \eqref{6F-2}, we have
\begin{eqnarray}\label{6LP-1}
\nonumber& &(t+1)^{\lambda}E(t)
+\int^{t}_{0}(s+1)^{\lambda}\left\langle\int^{s}_{0}G(s-\tau)\sqrt{A_{1}}u'(\tau)d\tau, \sqrt{A_{1}}u'(s)\right\rangle ds\\[0.15cm]
\nonumber&\le&CE(0)+C\lambda\int^{t}_{0}\lambda(s+1)^{\lambda-1}E(s)ds+\frac{1}{2}|G(t)(t+1)^{\lambda}|\left(\left\|\sqrt{A_{1}}u(0)\right\|^{2}+ \left\|\sqrt{A_{1}}u(t)\right\|^{2}\right)\\[0.15cm]
\nonumber&&+\frac{1}{2}\int_{0}^{t}|g(s)(s+1)^{\lambda}|\left(\left\|\sqrt{A_{1}}u(0)\right\|^{2}+ \left\|\sqrt{A_{1}}u(s)\right\|^{2}\right) ds\\[0.15cm]
&&+\frac{1}{2}\int_{0}^{t}|G(s)\lambda(s+1)^{\lambda-1}|\left(\left\|\sqrt{A_{1}}u(0)\right\|^{2}+ \left\|\sqrt{A_{1}}u(s)\right\|^{2}\right) ds.
\end{eqnarray}
In view of the assumption ${\rm(I'_{3}})$,  it follows that for any $\lambda\leq\lambda_{0}$,
$$\int_{0}^{+\infty}(t+1)^{\lambda}|g(t)|dt\leq \int_{0}^{+\infty}\frac{(t+1)^{\lambda}}{(t+1)^{\lambda_{0}}}(t+1)^{\lambda_{0}}|g(t)|dt\leq \int_{0}^{+\infty}(t+1)^{\lambda_{0}}|g(t)|dt.$$ This implies that $(t+1)^{\lambda}g(t)\in L^{1}(0,+\infty)$ holds for any $\lambda\leq\lambda_{0}$. Similarly, we can deduce that $(t+1)^{\lambda-1}G(t)\in L^{1}(0,+\infty)$ and that $(t+1)^{\lambda}G(t)$ is bounded for $\lambda\leq\lambda_{0}$.

Therefore, by \eqref{6LP-1} and  $$\left\|\sqrt{A_{1}}u(s)\right\|^{2}\leq CE(0),$$ we get the desired estimation \eqref{6-10}.
\end{proof}

By Theorem \ref{reg} and differentiating the system \eqref{NI}-\eqref{NII} respect to $t$, we have
\begin{eqnarray}
&& u'''(t)+A_{1}u'(t)-g(t)A_{1}u(0)-\int_{0}^{t}g(t-s)A_{1}u'(s)ds+\alpha u'(t)+B_{2}v'(t)=0,\qquad\label{DI}\\[0.38cm]
&& v'''(t)+A_{2}v'(t)+B_{1}u'(t)=0.\label{DII}
\end{eqnarray}

Clearly, the above new system \eqref{DI}-\eqref{DII} is similar to the system \eqref{NI}-\eqref{NII}.

Write
\begin{eqnarray}
E_{2}(t):=E(u'(t),v'(t),t),~\widetilde{E}_{2}(t):=\widetilde{E}(u'(t),v'(t),t).
\end{eqnarray}
It follows from assumption ${\rm(I'_3)}$ that there are two constants $c_{0},c_{1}>0$ such that
\begin{eqnarray}\label{EI-1}
0\leq c_{0}\widetilde{E}_{2}(t)\leq E_{2}(t)\leq c_{1}\widetilde{E}_{2}(t),~t\ge 0.
\end{eqnarray}

\begin{lemma}\label{6LD-2}
Let ${\rm (I_{3}')}$ hold. Then for any $0\leq\lambda\leq\lambda_{0}$,
\begin{eqnarray}\label{6-10-1}
\nonumber& &(t+1)^{\lambda}E_{2}(t)+\int^{t}_{0}(s+1)^{\lambda}\left\langle\int^{s}_{0}G(s-\tau)\sqrt{A_{1}}u''(\tau)d\tau, \sqrt{A_{1}}u''(s)\right\rangle ds\\[0.15cm]
&\le&C+C\lambda\int^{t}_{0}(s+1)^{\lambda-1}E_{2}(s)ds, ~~t\ge 0.
\end{eqnarray}
Here $C> 0$ is a constant.
\end{lemma}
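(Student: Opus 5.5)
The plan is to repeat the proof of Lemma \ref{6N-1}, now applied to the differentiated system \eqref{DI}-\eqref{DII}. Two extra ingredients are needed: an approximation step, and the treatment of the new source term $g(t)A_1u(0)$ in \eqref{DI}.

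First, by Theorem \ref{reg}(i) I assume $(u_0,v_0)\in\mathcal{D}_0$ and $(u_1,v_1)\in\mathcal{D}(A_1)\times\mathcal{D}(A_2)$. With this regularity it is legitimate to multiply \eqref{DI} by $(t+1)^{\lambda}u''(t)$ and \eqref{DII} by $(t+1)^{\lambda}v''(t)$. The general case then follows from the approximation in Theorem \ref{reg}(ii). The constant $C$ is allowed to depend on the initial data, in particular on $\|A_1u_0\|$, $\|\sqrt{A_1}u_1\|$ and $E_2(0)$.

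Adding the two identities and using $\langle B_1u',v''\rangle+\langle B_2v',u''\rangle=\frac{d}{dt}\langle B_1u',v'\rangle$ (by ${\rm(I_2)}$), I obtain the analogue of \eqref{6-4}. Its right-hand side is
$$(t+1)^{\lambda}\Big\langle \int_0^t g(t-s)\sqrt{A_1}u'(s)ds,\sqrt{A_1}u''(t)\Big\rangle+(t+1)^{\lambda}g(t)\langle A_1u_0,u''(t)\rangle .$$

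Next I integrate over $[0,t]$. For the convolution term I use the identity $g*w=G(0)w-G(t)w(0)-G*w'$ with $w=\sqrt{A_1}u'$, exactly as in \eqref{6F-1}. This produces three things:
\begin{itemize}
\item the term $\frac{G(0)}{2}(t+1)^{\lambda}\|\sqrt{A_1}u'(t)\|^2$, which combines with the energy to give $(1-G(0))$;
\item the positive-definite term $\int_0^t (s+1)^{\lambda}\langle G*\sqrt{A_1}u'',\sqrt{A_1}u''\rangle ds$ that appears on the left of \eqref{6-10-1};
\item boundary and remainder terms involving $G(t)(t+1)^{\lambda}$, $g(s)(s+1)^{\lambda}$ and $\lambda G(s)(s+1)^{\lambda-1}$, each paired with $\langle\sqrt{A_1}u_1,\sqrt{A_1}u'(s)\rangle$.
\end{itemize}
As in Lemma \ref{6N-1}, these remainder terms are handled by Young's inequality. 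The needed facts are that $(t+1)^{\lambda}G(t)$ is bounded, that $(t+1)^{\lambda}g\in L^1$, and that $(t+1)^{\lambda-1}G\in L^1$, all for $\lambda\le\lambda_0$. Combined with the uniform bound $\|\sqrt{A_1}u'(s)\|^2\le CE_2(s)\le C$, they contribute at most a constant. The uniform bound on $E_2$ follows from Theorem \ref{3T1} applied to $(u',v')$, or from the case $\lambda=0$ of the present argument.

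The main obstacle is the new forcing term $\int_0^t(s+1)^{\lambda}g(s)\langle A_1u_0,u''(s)\rangle ds$. Since $u''$ is bounded in $H$ ($\|u''\|^2\le CE_2\le C$), this term is at most $C\|A_1u_0\|\int_0^\infty(t+1)^{\lambda_0}|g|\,dt$, which is finite by ${\rm(I_3')}$. If a uniform bound on $E_2$ is not available beforehand, I would instead use $|\cdot|\le \|A_1u_0\|(s+1)^{\lambda}|g(s)|(1+E_2(s))$ and close the estimate with a Gronwall argument. The weight $(s+1)^{\lambda}|g|\in L^1$ makes the Gronwall factor bounded, first for $\lambda=0$ and then for general $\lambda$.

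Finally, the $\lambda(s+1)^{\lambda-1}$ terms produced by differentiating the weight are bounded by $C\lambda\int_0^t(s+1)^{\lambda-1}E_2(s)ds$, using the equivalence \eqref{EI-1}. Converting the resulting combination on the left into $(t+1)^{\lambda}E_2(t)$ via \eqref{EI-1} yields \eqref{6-10-1}.
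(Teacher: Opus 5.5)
Your proposal is correct and follows the paper's route. The paper first runs the unweighted ($\lambda=0$) multiplier estimate with $u'',v''$ and closes it by Gronwall, using $g\in L^1$, to get $\widetilde{E}_2$ bounded. It then applies the weighted multipliers $(t+1)^{\lambda}u''$, $(t+1)^{\lambda}v''$ together with the $G$-identity exactly as in Lemma \ref{6N-1}, which is what you do.

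One correction: Theorem \ref{3T1} cannot be applied directly to $(u',v')$, because \eqref{DI} carries the extra source term $g(t)A_1u(0)$. The Gronwall argument you list as a fallback is therefore the one actually needed, and it is the one the paper uses.
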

\begin{proof}
Taking $u''(t), v''(t)$ as the multipliers of \eqref{DI}, \eqref{DII} respectively and integrating them from $0$ to $t$, we get
\begin{eqnarray*}
\nonumber&&\frac{1}{2}\left\|u''(t)\right\|^{2}+\frac{1}{2}\left\|v''(t)\right\|^{2}+\frac{1}{2}\left\|\sqrt{A_{1}}u'(t)\right\|^{2}+\frac{\alpha}{2}\left\|u'(t)\right\|^{2}+\frac{1}{2}\left\|\sqrt{A_{2}}v'(t)\right\|^{2}+\left\langle B_{1}u'(t), v'(t)\right\rangle\\[0.15cm]
\nonumber&=&\frac{1}{2}\left\|u''(0)\right\|^{2}+\frac{1}{2}\left\|v''(0)\right\|^{2}+\frac{1}{2}\left\|\sqrt{A_{1}}u'(0)\right\|^{2}+\frac{\alpha}{2}\left\|u'(0)\right\|^{2}+\frac{1}{2}\left\|\sqrt{A_{2}}v'(0)\right\|^{2}+\left\langle B_{1}u'(0), v'(0)\right\rangle\\[0.15cm]
&&+\int_{0}^{t}\left\langle g\ast\sqrt{A_{1}}u'(s), \sqrt{A_{1}}u''(s)\right\rangle ds+\int_{0}^{t}\left\langle g(s)A_{1}u(0), u''(s)\right\rangle ds.
\end{eqnarray*}
Thus, the fact that for $t\ge 0,$
$$g\ast u(t)=G(0)u(t)-G(t)u(0)-G\ast u'(t)$$
 and Young's inequality show that
\begin{eqnarray*}
&&E_{2}(t)+\int_{0}^{t}\left\langle G\ast\sqrt{A_{1}}u''(s), \sqrt{A_{1}}u''(s)\right\rangle ds\\[0.15cm]
&\le&C-G(t)\left\langle\sqrt{A_{1}}u'(0), \sqrt{A_{1}}u'(t)\right\rangle-\int_{0}^{t}g(s)\left\langle\sqrt{A_{1}}u'(0), \sqrt{A_{1}}u'(s)\right\rangle ds\\[0.15cm]
&&+\int_{0}^{t}\left\langle g(s)A_{1}u(0), u''(s)\right\rangle ds\\[0.15cm]
&\le&C+\frac{c_{0}}{2}\widetilde{E}_{2}(t)+\int_{0}^{t}|g(s)|\widetilde{E}_{2}(s)ds,
\end{eqnarray*}
Since $G(t)$ is a positive definite kernel and \eqref{EI-1}, we see that
\begin{eqnarray*}
\widetilde{E}_{2}(t)\leq C+C\int_{0}^{t}|g(s)|\widetilde{E}_{2}(s)ds.
\end{eqnarray*}
Therefore, $\widetilde{E}_{2}(t)$ is bounded, that is, there is a constant $C>0$ such that
\begin{eqnarray}\label{DP0}
\left\|u''(t)\right\|^{2}, \left\|v''(t)\right\|^{2}, \left\|\sqrt{A_{1}}u'(t)\right\|^{2}, \left\|\sqrt{A_{2}}v'(t)\right\|^{2}\leq C.
\end{eqnarray}
By the arguments similar to those in the proof of Lemma \ref{6N-1}, multiplying \eqref{DI}-\eqref{DII} by $$(t+1)^{\lambda}u''(t), \quad (t+1)^{\lambda}v''(t)$$ respectively and integrating from $0$ to $t$, we obtain the desired estimation \eqref{6-10-1}.
\end{proof}

\begin{lemma}\label{4LH-1}
Let ${\rm (I_{3}')}$ hold. Then, for any $0\leq\lambda\leq\lambda_{0}$,
\begin{eqnarray}\label{6-12}
\nonumber&&(t+1)^{\lambda}\left(E(t)+E_{2}(t)\right)+\int_{0}^{t}(s+1)^{\lambda}\left\|\sqrt{A_{1}}u'(s)\right\|^{2}ds\\[0.15cm]
\nonumber&\leq & C+C\lambda\int^{t}_{0} (s+1)^{\lambda-1}\left(\left\|\sqrt{A_{1}}u(s)\right\|^{2}
+\left\|\sqrt{A_{2}}v(s)\right\|^{2}\right)ds\\[0.15cm]
&&+C\lambda\int^{t}_{0} (s+1)^{\lambda-1}\left(\left\|u''(s)\right\|^{2}+\left\|v''(s)\right\|^{2}
+\left\|\sqrt{A_{2}}v'(s)\right\|^{2}\right)ds,~~t\ge 0.
\end{eqnarray}
Here $C> 0$ is a constant.
\end{lemma}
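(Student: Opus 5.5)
Add Lemma \ref{6N-1} and Lemma \ref{6LD-2} to get
\begin{eqnarray*}
&&(t+1)^{\lambda}\left(E(t)+E_{2}(t)\right)+\int_{0}^{t}(s+1)^{\lambda}\left\langle G\ast\sqrt{A_{1}}u'(s), \sqrt{A_{1}}u'(s)\right\rangle ds\\[0.15cm]
&&+\int_{0}^{t}(s+1)^{\lambda}\left\langle G\ast\sqrt{A_{1}}u''(s), \sqrt{A_{1}}u''(s)\right\rangle ds
\;\le\; C+C\lambda\int_{0}^{t}(s+1)^{\lambda-1}\left(E(s)+E_{2}(s)\right)ds .
\end{eqnarray*}
The memory terms are then turned into a coercive term by Proposition \ref{2NP5}. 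Take $h=G$, $\varphi(t)=(t+1)^{\lambda}$ and the function $\sqrt{A_{1}}u'$, whose derivative is $\sqrt{A_{1}}u''$. By \eqref{6pos} and Proposition \ref{2P3} (multiply by the decreasing function $(t+1)^{\lambda-\lambda_0}$), $G$ is a strongly $(t+1)^{\lambda}$-positive definite kernel for $0\le\lambda\le\lambda_0$. Also $\varphi'/\varphi=\lambda/(t+1)\le\lambda_0$, so the hypothesis $\varphi'\le N\varphi$ holds with $N\ge\lambda_0$, which we may take large since strong positivity persists when $N$ is increased. Proposition \ref{2NP5} then gives
$$\int_0^t (s+1)^\lambda\|\sqrt{A_1}u'(s)\|^2ds\le C\|\sqrt{A_1}u_1\|^2+C\int_0^t(s+1)^\lambda\left(\langle G*\sqrt{A_1}u',\sqrt{A_1}u'\rangle+\langle G*\sqrt{A_1}u'',\sqrt{A_1}u''\rangle\right)ds.$$
Here $\sqrt{A_1}u_1$ is finite by Theorem \ref{reg}, and the regularity needed is justified by the approximation in Theorem \ref{reg}(ii). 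Combining this with the summed inequality yields \eqref{6-12}, except that the right side still contains $\lambda\int(s+1)^{\lambda-1}(E+E_2)\,ds$.

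It remains to remove from the right-hand side the terms not listed in \eqref{6-12}. These are $\|u'\|^2$, $\|v'\|^2$ and $\alpha\|u\|^2$ from $E$, the coupling term $\langle B_1u,v\rangle$, and $\|\sqrt{A_1}u'\|^2$, $\|u'\|^2$, $\|v'\|^2$, $\langle B_1u',v'\rangle$ from $E_2$. By the coercivity \eqref{3N-10} and \eqref{EI-1}, $E\le c_1\widetilde E$ and $E_2\le c_1\widetilde E_2$, and the coupling terms are absorbed this way. Poincaré's inequality from ${\rm(I_1)}$ gives $\|u'\|^2\le a_1\|\sqrt{A_1}u'\|^2$ and $\|u\|^2\le a_1\|\sqrt{A_1}u\|^2$, and likewise $\|v'\|^2\le a_2\|\sqrt{A_2}v'\|^2$, which is an allowed term. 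What is left is $C\lambda\int_0^t(s+1)^{\lambda-1}\|\sqrt{A_1}u'\|^2ds$, and I expect this to be the main obstacle.

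To handle it, split $[0,t]$ at a large time $T_0$. For $s\ge T_0$ we have $C\lambda(s+1)^{\lambda-1}\le\frac12(s+1)^\lambda$, so that part is absorbed into the left-hand integral $\int(s+1)^\lambda\|\sqrt{A_1}u'\|^2$. On $[0,T_0]$ the integrand is bounded by \eqref{DP0}, so that part is a constant. Alternatively one can use Young's inequality $\lambda(s+1)^{\lambda-1}\le\varepsilon(s+1)^\lambda+C_\varepsilon(s+1)^{\lambda-1}\mathbf 1_{s\le T}$. Either way the result is \eqref{6-12}. A related subtlety is keeping the constant in Proposition \ref{2NP5} uniform in $\lambda\in[0,\lambda_0]$, which I would secure by fixing $\delta$ and $N$ once for $\lambda_0$.
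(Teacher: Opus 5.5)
Your argument is essentially the paper's proof: sum Lemmas \ref{6N-1} and \ref{6LD-2}, apply Proposition \ref{2NP5} to obtain \eqref{6-11}, then reduce the right-hand side with ${\rm(I_1)}$. Your $T_0$-splitting also supplies the absorption of the leftover term $C\lambda\int_0^t(s+1)^{\lambda-1}\|\sqrt{A_1}u'(s)\|^2ds$, which the paper's display \eqref{6-12-1} still contains and simply asserts away.

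One correction concerns your claim that strong positive definiteness persists when $N$ is increased; this is false in general. For example, $\delta e^{-Nt}$ is trivially strongly $(t+1)^{\lambda}$-positive definite with parameter $N$, yet for $N<\lambda/2$ it is not even $(t+1)^{\lambda}$-positive definite, and subtracting $\delta' e^{-N't}$ with $N'\ge\lambda/2$ only makes matters worse. The fix is to check $\varphi'\le N\varphi$ only for $s\ge \lambda/N-1$ inside the proof of Proposition \ref{2NP5}; the initial interval then contributes only a bounded error by \eqref{DP0}, exactly as in your $T_0$ argument. Note that the paper does not verify this hypothesis at all.
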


\begin{proof}
From \eqref{6-10}, \eqref{6-10-1} and the Proposition \ref{2NP5}, it follows that
\begin{eqnarray}\label{6-11}
\nonumber&&(t+1)^{\lambda}\left(E(t)+E_{2}(t)\right)+\int_{0}^{t}(s+1)^{\lambda}\left\|\sqrt{A_{1}}u'(s)\right\|^{2}ds\\[0.15cm]
&\leq & C+C\lambda\int^{t}_{0}(s+1)^{\lambda-1}\left(E(s)+E_{2}(s)\right)ds.
\end{eqnarray}
Then, by the assumption ${\rm (I_{1})}$ and \eqref{6-11}, we see that
\begin{eqnarray}\label{6-12-1}
\nonumber&&(t+1)^{\lambda}\left(E(t)+E_{2}(t)\right)+\int_{0}^{t}(s+1)^{\lambda}\left\|\sqrt{A_{1}}u'(s)\right\|^{2}ds\\[0.15cm]
\nonumber&\leq & C+C\lambda\int^{t}_{0} (s+1)^{\lambda-1}\left(\left\|\sqrt{A_{1}}u(s)\right\|^{2}
+\left\|\sqrt{A_{2}}v(s)\right\|^{2}\right)ds\\[0.15cm]
\nonumber&&+C\lambda\int^{t}_{0} (s+1)^{\lambda-1}\left(\left\|u''(s)\right\|^{2}+\left\|v''(s)\right\|^{2}
+\left\|\sqrt{A_{2}}v'(s)\right\|^{2}\right)ds\\[0.15cm]
&&+C\lambda\int_{0}^{t}(s+1)^{\lambda-1}\left\|\sqrt{A_{1}}u'(s)\right\|^{2}ds.
\end{eqnarray}
This gives the estimation as required.
\end{proof}

\begin{lemma}\label{6L3}
Let ${\rm (I_{3}')}$ hold. Then, for any $0\leq\lambda\leq\lambda_{0}$,
\begin{eqnarray}\label{6-25}
\nonumber&&\int_{0}^{t}(s+1)^{\lambda}\left\|\sqrt{A_{2}}v'(s)\right\|^{2}ds\\[0.15cm]
\nonumber&\le& C+C(t+1)^{\lambda}\left(E_{2}(t)+\int_{0}^{t}|g(t-s)|\left\|\sqrt{A_{1}}u'(s)\right\|^{2}ds\right)\\[0.15cm]
\nonumber&&+C\int_{0}^{t}(s+1)^{\lambda-1}E_{2}(s)ds+\frac{1}{8}\int_{0}^{t}(s+1)^{\lambda}\left\|v''(s)\right\|^{2}ds\\[0.15cm]
&&+C\int_{0}^{t}(s+1)^{\lambda}\left(\left\|u''(s)\right\|^{2}+\left\|\sqrt{A_{1}}u'(s)\right\|^{2}\right)ds, ~t\ge 0.
\end{eqnarray}
Here $C> 0$ is a constant.
\end{lemma}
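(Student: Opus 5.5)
The plan is to derive an estimate for $\|B_2v'\|^2$, and hence for $\|\sqrt{A_2}v'\|^2$ via ${\rm(I_2)}$, by combining the differentiated system \eqref{DI}--\eqref{DII} with crossed multipliers. The four quantities $\|u''\|^2$, $\|\sqrt{A_1}u'\|^2$, $E_2$ and the memory boundary term are allowed on the right of \eqref{6-25}; only $\|v''\|^2$ must come with the small factor $\frac18$.

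\textbf{Step 1: the crossed multiplier.} I will take the inner product of \eqref{DI} with $(t+1)^{\lambda}B_2v'(t)$, subtract the inner product of \eqref{DII} with $(t+1)^{\lambda}B_1u'(t)$, and integrate over $(0,t)$. By ${\rm(I_2)}$,
$$\langle u''',B_2v'\rangle-\langle v''',B_1u'\rangle=\frac{d}{dt}\Big(\langle u'',B_2v'\rangle-\langle B_1u',v''\rangle\Big),$$
because the two cross terms $\langle B_1u'',v''\rangle$ cancel. After integrating by parts against $(t+1)^\lambda$, this produces:
\begin{itemize}
\item a boundary term bounded by $C+C(t+1)^{\lambda}E_2(t)$;
\item an integral $\lambda\int_0^t(s+1)^{\lambda-1}(\cdots)\,ds$ bounded by $C\int_0^t(s+1)^{\lambda-1}E_2(s)\,ds$.
\end{itemize}
What remains is
$$\int_0^t(s+1)^\lambda\Big(\|B_2v'\|^2-\|B_1u'\|^2+\langle A_1u',B_2v'\rangle-\langle B_1u',A_2v'\rangle+\alpha\langle u',B_2v'\rangle\Big)ds$$
plus the memory terms $-\langle g(s)A_1u(0)+g*A_1u',B_2v'\rangle$. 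The term $\|B_1u'\|^2\le\beta_1^2\|\sqrt{A_1}u'\|^2$ is admissible, and the $\alpha$ term is handled by Young's inequality with $\varepsilon\|B_2v'\|^2$.

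\textbf{Step 2: the different wave speeds.} The commutator $\langle A_1u',B_2v'\rangle-\langle B_1u',A_2v'\rangle$ is bounded by \eqref{4-2}. The part $\|B_1u'\|\,\|B_2v'\|$ is absorbed by Young's inequality. For $P(B_1u')P(B_2v')\le\varepsilon P^2(B_2v')+C_\varepsilon P^2(B_1u')$ I use \eqref{4-3-1}.

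For $P^2(B_2v')$, replace $A_2v'=-v'''-B_1u'$ using \eqref{DII}. Then
$$\langle v''',\Lambda_2B_2v'\rangle=\frac{d}{dt}\langle v'',\Lambda_2B_2v'\rangle-\langle v'',\Lambda_2B_2v''\rangle.$$
By \eqref{4-3-2} the last term is at most $D_2\|v''\|^2$, which, multiplied by $\varepsilon$, gives the $\frac18\int(s+1)^\lambda\|v''\|^2$ term. The time derivative again yields boundary and lower-order terms bounded by $E_2$.

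For $P^2(B_1u')$, replace $A_1u'$ using \eqref{DI}. The $u'''$ term is treated in the same way and contributes $\|u''\|^2$. The remaining terms $\alpha u'$, $B_2v'$ and $g(s)A_1u(0)$ are harmless, using weighted integrability of $g$ and boundedness of $\Lambda_1B_1$ on $\mathcal{D}(\sqrt{A_1})$.

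\textbf{Step 3: the memory terms (expected main obstacle).} $A_1$ is unbounded, so $\langle g*A_1u',B_2v'\rangle$ and $\langle g*A_1u',\Lambda_1B_1u'\rangle$ cannot be estimated directly. I will transfer $A_1$ across using ${\rm(I_2)}$ and \eqref{4-2}, namely $\langle A_1w,B_2v'\rangle=\langle B_1w,A_2v'\rangle+$ a commutator. Then I substitute $A_2v'=-v'''-B_1u'$ and integrate $\langle g*B_1u',v'''\rangle(s+1)^\lambda$ by parts in time.

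\begin{itemize}
\item The boundary term is $(t+1)^\lambda\langle (g*B_1u')(t),v''(t)\rangle$. By Cauchy--Schwarz in the convolution, it is at most $C(t+1)^\lambda\big(E_2(t)+\int_0^t|g(t-s)|\,\|\sqrt{A_1}u'(s)\|^2ds\big)$, which is exactly the term appearing in \eqref{6-25}.
\item The derivative $(g*B_1u')'=g(s)B_1u'(0)+g'*B_1u'$ is paired with $v''$. I bound it by $\varepsilon\|v''\|^2$ plus weighted convolution terms. These are controlled through $(t+1)^{\lambda_0}g,\ (t+1)^{\lambda_0}g'\in L^1$ from ${\rm(I_3')}$, together with $(s+1)^\lambda\le C(s-\tau+1)^\lambda(\tau+1)^\lambda$, yielding $C\int_0^t(s+1)^\lambda\|\sqrt{A_1}u'\|^2ds$.
\end{itemize}

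The commutator terms carrying $P(B_1(g*u'))$ are handled as in Step 2. Finally, choosing $\varepsilon$ small absorbs $\varepsilon\int(s+1)^\lambda\|B_2v'\|^2$ into the left-hand side, and $\beta\|\sqrt{A_2}v'\|\le\|B_2v'\|$ gives \eqref{6-25}. The delicate point is keeping the coefficient of $\int(s+1)^\lambda\|v''\|^2$ at most $\frac18$ across all the $\varepsilon$-splittings.
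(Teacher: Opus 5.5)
\paragraph{Verdict: genuine gap.} The gap comes from treating $u'$ and $g*u'$ separately when you invoke ${\rm(I_4)}$.

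\paragraph{Where the argument fails.} In Step 2 you control $P^2(B_1u')$ through \eqref{4-3-1}, i.e.\ through $\langle A_1u',\Lambda_1B_1u'\rangle$, and then substitute for $A_1u'$ from \eqref{DI}. But \eqref{DI} only determines $A_1u'-g*A_1u'=-u'''+g(t)A_1u(0)-\alpha u'-B_2v'$. The substitution therefore leaves the term $\langle g*A_1u',\Lambda_1B_1u'\rangle$, which you drop in Step 2 and list in Step 3.

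Your Step 3 device of moving $A_1$ across with \eqref{4-2} does not apply to this term. The estimate \eqref{4-2} only concerns $\langle A_1y,B_2z\rangle-\langle B_1y,A_2z\rangle$ with $z\in\mathcal{D}(A_2)$, and $\Lambda_1B_1u'$ is not of the form $B_2z$. Nothing in ${\rm(I_1)}$, ${\rm(I_2)}$, ${\rm(I_4)}$ bounds an off-diagonal pairing $\langle A_1y,\Lambda_1B_1z\rangle$ by energy-level norms. Inequality \eqref{4-3-1} is one-sided and purely diagonal, so it cannot be polarized.

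The same obstruction reappears in Step 3. Applying \eqref{4-2} separately to the pair $(g*u',v')$ produces $P(B_1(g*u'))$, and \eqref{4-3-1} then needs $\langle g*A_1u',\Lambda_1B_1(g*u')\rangle$. Rewriting $g*A_1u'=A_1u'+u'''-g(t)A_1u(0)+\alpha u'+B_2v'$ only trades this for the off-diagonal term $\langle A_1u',\Lambda_1B_1(g*u')\rangle$. Since $P$ is merely a nonnegative functional, you also cannot split or recombine $P(B_1u')$ and $P(B_1(g*u'))$ afterwards.

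\paragraph{The missing idea.} Keep $w:=u'-g*u'$ together as a single unit from the start. This is what the paper does:
\begin{enumerate}
\item It pairs \eqref{DII} with $B_1w=B_1u'-g*B_1u'$ rather than $B_1u'$, i.e.\ it uses the functional $F$ of \eqref{6-22}.
\item The only commutator is then $\Psi=\langle B_1w,A_2v'\rangle-\langle A_1w,B_2v'\rangle$, estimated by one application of \eqref{4-2} to the pair $(w,v')$.
\item Inequality \eqref{4-3-1} is applied to $B_1w$, and $A_1w=-u'''+g(t)A_1u(0)-\alpha u'-B_2v'$ is given exactly by \eqref{DI}, with no memory term left over.
\item The $u'''$ part is integrated by parts in time using $(B_1w)'=B_1u''-g(t)B_1u'(t)+\int_0^tg'(t-s)\big(B_1u'(t)-B_1u'(s)\big)ds$, together with \eqref{4-3-2}.
\end{enumerate}

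\paragraph{How to repair your argument.} By linearity, your two commutators (for $(u',v')$ and for $(g*u',v')$) combine into the single commutator $\Psi$ for $(w,v')$. The leftover term $\langle g*B_1u',A_2v'\rangle$ is the one you already treat via \eqref{DII}. So estimating the sum of the two commutators with one application of \eqref{4-2} fixes the proof. The rest of your bookkeeping then matches the paper: the boundary term with $\int_0^t|g(t-s)|\|\sqrt{A_1}u'(s)\|^2ds$, the weighted convolution bounds, absorbing $\varepsilon\|B_2v'\|^2$, and keeping the coefficient of $\int(s+1)^\lambda\|v''\|^2$ at $\frac18$.

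\paragraph{A minor slip.} The correct derivative is $(g*B_1u')'=g(0)B_1u'+g'*B_1u'$, equivalently $g(s)B_1u'(0)+g*B_1u''$. Your formula $g(s)B_1u'(0)+g'*B_1u'$ mixes the two. The first form is the one your bound via $(t+1)^{\lambda_0}g'\in L^1$ actually needs.
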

\begin{proof}
Define
\begin{eqnarray}\label{6-22}
\nonumber F(t)&:=&\left\langle u''(t),B_{2}v'(t)\right\rangle-\left\langle \left(1-\int_{0}^{t}g(s)ds\right)B_{1}u'(t),v''(t)\right\rangle\\[0.15cm]
&&-\left\langle \int_{0}^{t}g(t-s)\left(B_{1}u'(t)-B_{1}u'(s)\right) ds,v''(t)\right\rangle.
\end{eqnarray}
Then, by \eqref{DI}, \eqref{DII} and assumption ${\rm(I_2)}$, we have
\begin{eqnarray}\label{4-22}
\nonumber\frac{d}{dt}F(t)&=& -\left\|B_{2}v'(t)\right\|^{2}-\alpha \left\langle u'(t),B_{2}v'(t) \right\rangle+g(t)\left\langle A_{1}u(0),B_{2}v'(t)\right\rangle+\left\|B_{1}u'(t)\right\|^{2}\\[0.15cm]
\nonumber&&-\left\langle\int_{0}^{t}g(t-s)B_{1}u'(s)ds, B_{1}u'(t)\right\rangle
+g(t)\left\langle B_{1}u'(t),v''(t)\right\rangle\\[0.15cm]
&&-\left\langle \int_{0}^{t}g'(t-s)(B_{1}u'(t)-B_{1}u'(s)) ds,v''(t)\right\rangle+\Psi,
\end{eqnarray}
where
\begin{eqnarray*}\label{4-23}
\Psi(t):=\left\langle B_{1}u'(t)-\int_{0}^{t}g(t-s)B_{1}u'(s) ds, A_{2}v'(t)\right\rangle-\left\langle A_{1}u'(t)-\int_{0}^{t}g(t-s)A_{1}u'(s) ds, B_{2}v'(t)\right\rangle.
\end{eqnarray*}
Therefore, by \eqref{4-22} and Young's inequality, we deduce that
\begin{eqnarray}\label{F}
\nonumber&&\int_{0}^{t}(s+1)^{\lambda}\left\|B_{2}v'(s)\right\|^{2}ds\\[0.15cm]
\nonumber&\le& |F(0)|+(t+1)^{\lambda}|F(t)|+\lambda\int_{0}^{t}(s+1)^{\lambda-1}|F(s)|ds+\zeta_{1}\int_{0}^{t}(s+1)^{\lambda}\left\|B_{2}v'(s)\right\|^{2}ds\\[0.15cm]
\nonumber&&+\frac{\alpha^{2}}{4\zeta_{1}}\int_{0}^{t}(s+1)^{\lambda}\left\|u'(s)\right\|^{2}ds+\int_{0}^{t}(s+1)^{\lambda}\left\|B_{1}u'(s)\right\|^{2}ds\\[0.15cm]
\nonumber&&+\frac{1}{2}\int_{0}^{t}(s+1)^{\lambda}|g(s)|\left(\left\|A_{1}u(0)\right\|^{2}+\left\|B_{2}v'(s)\right\|^{2}+\left\|B_{1}u'(s)\right\|^{2}+\left\|v''(s)\right\|^{2}\right)ds\\[0.15cm]
\nonumber&&+\frac{1}{2}\int_{0}^{t}(s+1)^{\lambda}\left\|\int_{0}^{s}g(s-\tau)B_{1}u'(\tau)d\tau\right\|^{2}ds+\frac{1}{2}\int_{0}^{t}(s+1)^{\lambda}\left\|B_{1}u'(s)\right\|^{2}ds\\[0.15cm]
\nonumber&&+\frac{1}{4\zeta_{2}}\int_{0}^{t}(s+1)^{\lambda}\left\|\int_{0}^{s}g'(s-\tau)\left(B_{1}u'(s)-B_{1}u'(\tau)\right)d\tau\right\|^{2}ds\\[0.15cm]
&&+\zeta_{2}\int_{0}^{t}(s+1)^{\lambda}\left\|v''(s)\right\|^{2}ds+\int_{0}^{t}(s+1)^{\lambda}|\Psi(s)|ds.
\end{eqnarray}
Since
\begin{eqnarray*}
|F(t)|\le C\left(E_{2}(t)+\int_{0}^{t}|g(t-s)|\left\|B_{1}u'(s)\right\|^{2}ds\right),
\end{eqnarray*}
we see by \eqref{F} that
\begin{eqnarray}\label{F-1}
\nonumber&&\int_{0}^{t}(s+1)^{\lambda}\left\|B_{2}v'(s)\right\|^{2}ds\\[0.15cm]
\nonumber&\le& C+C(t+1)^{\lambda}\left(E_{2}(t)+\int_{0}^{t}|g(t-s)|\left\|B_{1}u'(s)\right\|^{2}ds\right)+C\int_{0}^{t}(s+1)^{\lambda-1}E_{2}(s)ds\\[0.15cm]
\nonumber&&+\zeta_{1}\int_{0}^{t}(s+1)^{\lambda}\left\|B_{2}v'(s)\right\|^{2}ds+\zeta_{2}\int_{0}^{t}(s+1)^{\lambda}\left\|v''(s)\right\|^{2}ds\\[0.15cm]
&&+C\left(\zeta_{1},\zeta_{2}\right)\int_{0}^{t}(s+1)^{\lambda}\left\|B_{1}u'(s)\right\|^{2}ds+\int_{0}^{t}(s+1)^{\lambda}|\Psi(s)|ds.
\end{eqnarray}
On the other hand, by assumption ${\rm(I_4)}$, we have
\begin{eqnarray}\label{PP}
\nonumber\int_{0}^{t}(s+1)^{\lambda}|\Psi(s)|ds&\le&D_{1}\int_{0}^{t}(s+1)^{\lambda}\left\|B_{1}u'(s)-\int_{0}^{s}g(s-\tau)B_{1}u'(\tau) d\tau\right\|\left\|B_{2}v'(s)\right\|ds\\[0.15cm]
\nonumber&&+D_{1}\int_{0}^{t}(s+1)^{\lambda}P\left(B_{1}u'(s)-\int_{0}^{s}g(s-\tau)B_{1}u'(\tau) d\tau\right)P\left(B_{2}v'(s)\right)ds\\[0.15cm]
\nonumber&\le&\frac{D_{1}^{2}}{4\xi_{1}}\int_{0}^{t}(s+1)^{\lambda}\left\|B_{1}u'(s)-\int_{0}^{s}g(s-\tau)B_{1}u'(\tau) d\tau\right\|^{2}ds\\[0.15cm]
\nonumber&&+\xi_{1}\int_{0}^{t}(s+1)^{\lambda}\left\|B_{2}v'(s)\right\|^{2}ds+\frac{D_{1}}{4\xi_{2}}\int_{0}^{t}(s+1)^{\lambda}P^{2}\left(B_{2}v'(s)\right)ds\\[0.15cm]
\nonumber&&+D_{1}\xi_{2}\int_{0}^{t}(s+1)^{\lambda}P^{2}\left(B_{1}u'(s)-\int_{0}^{s}g(s-\tau)B_{1}u'(\tau) d\tau\right)ds\\[0.15cm]
\nonumber&\le&\xi_{1}\int_{0}^{t}(s+1)^{\lambda}\left\|B_{2}v'(s)\right\|^{2}ds+C(\xi_{1})\int_{0}^{t}(s+1)^{\lambda}\left\|B_{1}u'(s)\right\|^{2}ds\\[0.15cm]
\nonumber&&+D_{1}\xi_{2}\int_{0}^{t}(s+1)^{\lambda}P^{2}\left(B_{1}u'(s)-\int_{0}^{s}g(s-\tau)B_{1}u'(\tau) d\tau\right)ds\\[0.15cm]
&&+\frac{D_{1}}{4\xi_{2}}\int_{0}^{t}(s+1)^{\lambda}P^{2}\left(B_{2}v'(s)\right)ds.
\end{eqnarray}
Moreover, by assumption ${\rm(I_4)}$, we obtain
\begin{eqnarray}\label{PU}
\nonumber&&\int_{0}^{t}(s+1)^{\lambda}P^{2}\left(B_{1}u'(s)-\int_{0}^{s}g(s-\tau)B_{1}u'(\tau) d\tau\right)ds\\[0.15cm]
\nonumber&\le&D_{2}\int_{0}^{t}(s+1)^{\lambda}\left\|B_{1}u'(s)-\int_{0}^{s}g(s-\tau)B_{1}u'(\tau) d\tau\right\|^{2}ds\\[0.15cm]
\nonumber&&+\int_{0}^{t}(s+1)^{\lambda}\bigg\langle A_{1}u'(s)-\int_{0}^{s}g(s-\tau)A_{1}u'(\tau) d\tau, \\[0.15cm] \nonumber&&\quad\Lambda_{1}\left(B_{1}u'(s)-\int_{0}^{s}g(s-\tau)B_{1}u'(\tau) d\tau\right)\bigg\rangle ds\\[0.15cm]
\nonumber&\le&C\int_{0}^{t}(s+1)^{\lambda}\left\|B_{1}u'(s)\right\|^{2}ds\\[0.15cm]
\nonumber&&-\int_{0}^{t}(s+1)^{\lambda}\bigg\langle u'''(s), \Lambda_{1}\left(B_{1}u'(s)-\int_{0}^{s}g(s-\tau)B_{1}u'(\tau) d\tau\right)\bigg\rangle ds\\[0.15cm]
\nonumber&&+\int_{0}^{t}(s+1)^{\lambda}\bigg\langle g(s)A_{1}u(0), \Lambda_{1}\left(B_{1}u'(s)-\int_{0}^{s}g(s-\tau)B_{1}u'(\tau) d\tau\right)\bigg\rangle ds\\[0.15cm]
\nonumber&&-\int_{0}^{t}(s+1)^{\lambda}\bigg\langle \alpha u'(s)+B_{2}v'(s), \Lambda_{1}\left(B_{1}u'(s)-\int_{0}^{s}g(s-\tau)B_{1}u'(\tau) d\tau\right)\bigg\rangle ds\\[0.15cm]
\nonumber&\le&C+\xi_{3}\int_{0}^{t}(s+1)^{\lambda}\left\|B_{2}v'(s)\right\|^{2}ds+C(\xi_{3})\int_{0}^{t}(s+1)^{\lambda}\left\|B_{1}u'(s)\right\|^{2}ds\\[0.15cm]
\nonumber&&-(s+1)^{\lambda}\bigg\langle u''(s), \Lambda_{1}\left(B_{1}u'(s)-\int_{0}^{s}g(s-\tau)B_{1}u'(\tau) d\tau\right)\bigg\rangle_{0}^{t}\\[0.15cm]
\nonumber&&+\lambda\int_{0}^{t}(s+1)^{\lambda-1}\bigg\langle u''(s), \Lambda_{1}\left(B_{1}u'(s)-\int_{0}^{s}g(s-\tau)B_{1}u'(\tau) d\tau\right)\bigg\rangle ds\\[0.15cm]
\nonumber&&+\int_{0}^{t}(s+1)^{\lambda}\left\langle u''(s), \Lambda_{1}B_{1}u''(s)\right\rangle ds\\[0.15cm]
\nonumber&&+\int_{0}^{t}(s+1)^{\lambda}\bigg\langle u''(s), \Lambda_{1}\left(\int_{0}^{s}g'(s-\tau)\left(B_{1}u'(s)-B_{1}u'(\tau)\right) d\tau -g(s)B_{1}u'(s)\right)\bigg\rangle ds\\[0.15cm]
\nonumber&\le&C+C(t+1)^{\lambda}\left(\left\|u''(t)\right\|^{2}+\left\|B_{1}u'(t)\right\|^{2}+\int_{0}^{t}|g(t-s)|\left\|B_{1}u'(s)\right\|^{2}ds\right)\\[0.15cm]
&&+\xi_{3}\int_{0}^{t}(s+1)^{\lambda}\left\|B_{2}v'(s)\right\|^{2}ds+C(\xi_{3})\int_{0}^{t}(s+1)^{\lambda}\left(\left\|u''(s)\right\|^{2}+\left\|B_{1}u'(s)\right\|^{2}\right)ds,
\end{eqnarray}
and
\begin{eqnarray}\label{PV}
\nonumber&&\int_{0}^{t}(s+1)^{\lambda}P^{2}\left(B_{2}v'(s)\right)ds\\[0.15cm]
\nonumber&\le&D_{2}\int_{0}^{t}(s+1)^{\lambda}\left\|B_{2}v'(s)\right\|^{2}ds+\int_{0}^{t}(s+1)^{\lambda}\left\langle A_{2}v'(s),\Lambda_{2}B_{2}v'(s)\right\rangle ds\\[0.15cm]
\nonumber&\le&D_{2}\int_{0}^{t}(s+1)^{\lambda}\left\|B_{2}v'(s)\right\|^{2}ds-\int_{0}^{t}(s+1)^{\lambda}\left\langle v'''(s),\Lambda_{2}B_{2}v'(s)\right\rangle ds\\[0.15cm]
\nonumber&&-\int_{0}^{t}(s+1)^{\lambda}\left\langle B_{1}u'(s),\Lambda_{2}B_{2}v'(s)\right\rangle ds\\[0.15cm]
\nonumber&\le&D_{2}\int_{0}^{t}(s+1)^{\lambda}\left\|B_{2}v'(s)\right\|^{2}ds-(s+1)^{\lambda}\left\langle v''(s),\Lambda_{2}B_{2}v'(s)\right\rangle_{0}^{t}\\[0.15cm]
\nonumber&&+\lambda\int_{0}^{t}(s+1)^{\lambda-1}\left\langle v''(s),\Lambda_{2}B_{2}v'(s)\right\rangle ds+\int_{0}^{t}(s+1)^{\lambda}\left\langle v''(s),\Lambda_{2}B_{2}v''(s)\right\rangle ds\\[0.15cm]
\nonumber&&-\int_{0}^{t}(s+1)^{\lambda}\left\langle B_{1}u'(s),\Lambda_{2}B_{2}v'(s)\right\rangle ds\\[0.15cm]
\nonumber&\le&C+C(t+1)^{\lambda}\left(\left\| v''(t)\right\|^{2}+\left\|B_{2}v'(t)\right\|^{2}\right)+\left(D_{2}+\xi_{4}\right)\int_{0}^{t}(s+1)^{\lambda}\left\|B_{2}v'(s)\right\|^{2}ds\\[0.15cm]
\nonumber&&+C(\xi_{4})\int_{0}^{t}(s+1)^{\lambda}\left\|B_{1}u'(s)\right\|^{2}ds+C\int_{0}^{t}(s+1)^{\lambda-1}\left(\left\|v''(s)\right\|^{2}+\left\|B_{2}v'(s)\right\|^{2}\right)ds\\[0.15cm]
&&+D_{2}\int_{0}^{t}(s+1)^{\lambda}\left\|v''(s)\right\|^{2}ds.
\end{eqnarray}
Putting \eqref{PU}, \eqref{PV} into \eqref{PP} gives that
\begin{eqnarray}\label{PP-1}
\nonumber\int_{0}^{t}(s+1)^{\lambda}|\Psi(s)|ds&\le&C+C(t+1)^{\lambda}\left(E_{2}(t)+\int_{0}^{t}|g(t-s)|\left\|B_{1}u'(s)\right\|^{2}ds\right)\\[0.15cm]
\nonumber&&+\left(\xi_{1}+D_{1}\xi_{2}\xi_{3}+\frac{D_{1}}{4\xi_{2}}\left(D_{2}
+\xi_{4}\right)\right)\int_{0}^{t}(s+1)^{\lambda}\left\|B_{2}v'(s)\right\|^{2}ds\\[0.15cm]
\nonumber&&
+\left(\frac{D_{1}}{4\xi_{2}}D_{2}\right)\int_{0}^{t}(s+1)^{\lambda}\left\|v''(s)\right\|^{2}ds\\[0.15cm]
\nonumber&&+C\left(\xi_{1},\xi_{2},\xi_{3},\xi_{4}\right)\int_{0}^{t}(s+1)^{\lambda}\left(\left\|u''(s)\right\|^{2}+\left\|B_{2}u'(s)\right\|^{2}\right)ds\\[0.15cm]
&&+C\int_{0}^{t}(s+1)^{\lambda-1}\left(\left\|v''(s)\right\|^{2}+\left\|B_{2}v'(s)\right\|^{2}\right)ds.
\end{eqnarray}
This, together with \eqref{F-1}, yields that
\begin{eqnarray}\label{F-1-1}
\nonumber&&\int_{0}^{t}(s+1)^{\lambda}\left\|B_{2}v'(s)\right\|^{2}ds\\[0.15cm]
\nonumber&\le& C+C(t+1)^{\lambda}\left(E_{2}(t)+\int_{0}^{t}|g(t-s)|\left\|B_{1}u'(s)\right\|^{2}ds\right)+C\int_{0}^{t}(s+1)^{\lambda-1}E_{2}(s)ds\\[0.15cm]
\nonumber&&+\left(\zeta_{1}+\xi_{1}+D_{1}\xi_{2}\xi_{3}+\frac{D_{1}}{4\xi_{2}}\left(D_{2}+\xi_{4}\right)\right)\int_{0}^{t}(s+1)^{\lambda}\left\|B_{2}v'(s)\right\|^{2}ds\\[0.15cm]
\nonumber&&+\left(\zeta_{2}+\frac{D_{1}}{4\xi_{2}}D_{2}\right)\int_{0}^{t}(s+1)^{\lambda}\left\|v''(s)\right\|^{2}ds\\[0.15cm]
&&+C\left(\zeta_{1},\zeta_{2},\xi_{1},\xi_{2},\xi_{3},\xi_{4}\right)\int_{0}^{t}(s+1)^{\lambda}\left(\left\|u''(s)\right\|^{2}+\left\|B_{2}u'(s)\right\|^{2}\right)ds.
\end{eqnarray}
Take
\begin{eqnarray*}
\zeta_{1}=\frac{1}{8},~~\zeta_{2}=\frac{\beta^{2}}{32},~~\xi_{1}=\frac{1}{8},~~\xi_{2}=\frac{D_{1}D_{2}}{\min\left\{\frac{1}{4}, \frac{\beta^{2}}{8}\right\}},~~\xi_{3}=\frac{1}{8D_{1}\xi_{2}},~~\xi_{4}=D_{2}.
\end{eqnarray*}
Then, by \eqref{F-1-1}, we get
\begin{eqnarray}\label{F-2}
\nonumber\int_{0}^{t}(s+1)^{\lambda}\left\|B_{2}v'(s)\right\|^{2}ds
&\le& C+C(t+1)^{\lambda}\left(E_{2}(t)+\int_{0}^{t}|g(t-s)|\left\|B_{1}u'(s)\right\|^{2}ds\right)\\[0.15cm]
\nonumber&&+C\int_{0}^{t}(s+1)^{\lambda-1}E_{2}(s)ds+\frac{\beta^{2}}{8}\int_{0}^{t}(s+1)^{\lambda}\left\|v''(s)\right\|^{2}ds\\[0.15cm]
&&+C\int_{0}^{t}(s+1)^{\lambda}\left(\left\|u''(s)\right\|^{2}+\left\|B_{2}u'(s)\right\|^{2}\right)ds.
\end{eqnarray}
This, together with assumption ${\rm(I_2)}$, leads the desired conclusion.
\end{proof}

\begin{lemma}\label{4LN-1}
Let ${\rm(I_{3}')}$ hold. Then, for any $$0\le\lambda\leq\min\{\lambda_{0},2\left(\lambda_{0}-1\right)\} \quad (\lambda_{0}\geq 1),$$ we have
\begin{align}\label{6-19}
\nonumber &\int_{0}^{t}(s+1)^{\lambda}\left(\left\|\sqrt{A_{1}}u(s)\right\|^{2}+\left\|\sqrt{A_{2}}v(s)\right\|^{2}\right)ds\\[0.15cm]
\leq&CE(0)+C(t+1)^{\lambda}E(t)+C\int_{0}^{t}(s+1)^{\lambda}\left(\left\|\sqrt{A_{1}}u'(s)\right\|^{2}+\left\|\sqrt{A_{1}}v'(s)\right\|^{2}\right) ds,~~t\ge 0.
\end{align}
Here $C>0$ is a constant.
\end{lemma}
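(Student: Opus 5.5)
Multiply \eqref{NI} by $(t+1)^{\lambda}u(t)$ and \eqref{NII} by $(t+1)^{\lambda}v(t)$, add, and integrate over $[0,t]$. The potential-energy terms come out from the spatial operators. The inertial terms are handled by integrating by parts in time, which turns them into boundary terms $(s+1)^{\lambda}\langle u'(s),u(s)\rangle\big|_0^t$ and $(s+1)^{\lambda}\langle v'(s),v(s)\rangle\big|_0^t$. These are bounded by $C(t+1)^{\lambda}E(t)+CE(0)$ via (I$_1$) and Theorem \ref{3T1}. The time integration by parts also produces $-\int_0^t(s+1)^{\lambda}(\|u'\|^2+\|v'\|^2)ds$ and lower-order terms $\lambda\int_0^t(s+1)^{\lambda-1}\langle u',u\rangle ds$ (likewise for $v$). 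Since $\|u'\|^2\le a_1\|\sqrt{A_1}u'\|^2$ and $\|v'\|^2\le a_2\|\sqrt{A_2}v'\|^2$, the kinetic integrals are controlled by the right-hand side of \eqref{6-19}, where the term $\|\sqrt{A_1}v'\|$ is read as $\|\sqrt{A_2}v'\|$. The $\lambda$-terms are split by Young's inequality into $\epsilon\int(s+1)^{\lambda}(\|u\|^2+\|v\|^2)$, which is absorbed, plus $C\int (s+1)^{\lambda-2}\|u'\|^2$, which is dominated as well.

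The coupled potential part is $\|\sqrt{A_1}u\|^2+\|\sqrt{A_2}v\|^2+\alpha\|u\|^2+2\langle B_1u,v\rangle$. This is coercive, with constant $\ge c\,(\|\sqrt{A_1}u\|^2+\|\sqrt{A_2}v\|^2)$, exactly as in the equivalence $E\sim\widetilde E$ of Theorem \ref{3T1}, which uses the condition $G(0)+a_1(\alpha-\beta_1^2)<1$. The delicate term is the memory term, which I would rewrite as
\[
\int_0^t(s+1)^{\lambda}\Big\langle \int_0^s g(s-\tau)\sqrt{A_1}u(\tau)d\tau,\sqrt{A_1}u(s)\Big\rangle ds .
\]
I would use the identity $g*w=G(0)w-G(\cdot)w(0)-G*w'$ with $w=\sqrt{A_1}u$. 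The $G(0)\|\sqrt{A_1}u\|^2$ piece combines with the leading term to give the factor $1-G(0)>0$, so coercivity survives. The $G(s)w(0)$ piece is bounded by $CE(0)+\epsilon\int(s+1)^{\lambda}\|w\|^2$ plus $C\int(s+1)^{\lambda}|G(s)|^2ds\,E(0)$, which is finite because $(s+1)^{\lambda_0}G$ is bounded and $(s+1)^{\lambda_0-1}G\in L^1$.

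The main obstacle is the remaining cross term $\int_0^t(s+1)^{\lambda}\langle G*\sqrt{A_1}u'(s),\sqrt{A_1}u(s)\rangle ds$, and this is where the restriction $\lambda\le 2(\lambda_0-1)$ should enter. I would estimate it by Cauchy--Schwarz as
\[
\epsilon\int_0^t(s+1)^{\lambda}\|\sqrt{A_1}u\|^2ds+C_\epsilon\int_0^t(s+1)^{\lambda}\|G*\sqrt{A_1}u'(s)\|^2ds ,
\]
and then apply Young's convolution inequality in weighted form. Writing $(s+1)^{\lambda/2}\le C\big((s-\tau+1)^{\lambda/2}+(\tau+1)^{\lambda/2}\big)$ reduces the task to $\|(\cdot+1)^{\lambda/2}G\|_{L^1}$ and $\|G\|_{L^1}$. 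The first is finite precisely when $\lambda/2<\lambda_0-1$, and in the borderline case I would use the integrability $(t+1)^{\lambda_0}g\in L^1$ directly via Fubini. This yields the bound $C\int_0^t(s+1)^{\lambda}\|\sqrt{A_1}u'\|^2ds$. Absorbing all $\epsilon$-terms into the left side gives \eqref{6-19}, with the constraint $\lambda\le\lambda_0$ used for the boundedness of $(s+1)^{\lambda}G$.
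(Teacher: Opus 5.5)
Your argument is correct. Its skeleton coincides with the paper's: multiplying by $(t+1)^{\lambda}u$ and $(t+1)^{\lambda}v$ is the paper's computation of $\frac{d}{dt}\big(\langle u',u\rangle+\langle v',v\rangle\big)$. The memory term is rewritten via $g\ast w=G(0)w-G(\cdot)w(0)-G\ast w'$, and the cross term is reduced by Young's inequality to $\int_0^t(s+1)^{\lambda}\|G\ast\sqrt{A_1}u'(s)\|^2ds$.

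Where you differ is the weighted convolution bound. The paper, in \eqref{6-17}--\eqref{6-18}, applies a pointwise Cauchy--Schwarz inequality that loads the weight $(s-\tau+1)^{\lambda_0-1}$ onto $|G|$. It then uses the multiplicative bound $(s+1)^{\lambda}\le(s-\tau+1)^{\lambda}(\tau+1)^{\lambda}$ and Fubini, so the leftover kernel $(\sigma+1)^{\lambda+1-\lambda_0}|G(\sigma)|$ must be dominated by $(\sigma+1)^{\lambda_0-1}|G(\sigma)|$, i.e.\ $\lambda\le 2(\lambda_0-1)$. You split the weight additively and use Young's $L^1\ast L^2$ inequality. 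This puts only $(\sigma+1)^{\lambda/2}$ on $G$ and makes the threshold $\lambda/2\le\lambda_0-1$ transparent. The cost is an unweighted $L^2$ norm of $\sqrt{A_1}u'$, which is harmless since $(s+1)^{\lambda}\ge 1$.

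Both routes rest on the single fact \eqref{6-16}, $(\sigma+1)^{\lambda_0-1}G\in L^1$, which is exactly your Fubini argument. It covers all $\lambda\le 2(\lambda_0-1)$ at once, so no borderline case distinction is needed. The strict condition $\lambda/2<\lambda_0-1$ you mention only arises from the cruder pointwise bound $|G(\sigma)|\le G_{\lambda_0}(\sigma+1)^{-\lambda_0}$.

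Two small points in your favour. First, your $\epsilon$-splitting of $\lambda(s+1)^{\lambda-1}\langle u',u\rangle$ is more explicit than \eqref{6-14}. That inequality leaves $C\lambda\int_0^t(s+1)^{\lambda-1}\big(\|\sqrt{A_1}u\|^2+\|\sqrt{A_2}v\|^2\big)ds$ on the right and absorbs it only tacitly. Second, reading $\|\sqrt{A_1}v'\|$ as $\|\sqrt{A_2}v'\|$ is correct, since only $\|v'\|^2\le a_2\|\sqrt{A_2}v'\|^2$ is available.
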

\begin{proof}
By \eqref{NI}-\eqref{NII}, we have
\begin{eqnarray}\label{6-13}
\nonumber&&\frac{d}{dt}\left(\left\langle u'(t), u(t)\right\rangle+\left\langle v'(t), v(t)\right\rangle\right)\\[0.15cm]
\nonumber&=&\left\langle u''(t), u(t)\right\rangle+\left\|u'(t)\right\|^{2}+\left\langle v''(t), v(t)\right\rangle+\left\|v'(t)\right\|^{2}\\[0.15cm]
\nonumber&=&-\bigg(\left\|\sqrt{A_{1}}u(t)\right\|^{2}+\left\|\sqrt{A_{2}}v(t)\right\|^{2}+\alpha\left\|u(t)\right\|^{2}+2\left\langle B_{1}u(t), v(t)\right\rangle\bigg)\\[0.15cm]
\nonumber&&+\left\|u'(t)\right\|^{2}
+\left\|v'(t)\right\|^{2}+\left\langle\int_{0}^{t}g(t-s)\sqrt{A_{1}}u(s)ds, \sqrt{A_{1}}u(t)\right\rangle\\[0.15cm]
\nonumber&=&-\left((1-G(0))\left\|\sqrt{A_{1}}u(t)\right\|^{2}+\left\|\sqrt{A_{2}}v(t)\right\|^{2}+\alpha\left\|u(t)\right\|^{2}+2\left\langle B_{1}u(t), v(t)\right\rangle\right)\\[0.15cm]
 &&+\left\|u'(t)\right\|^{2}+\left\|v'(t)\right\|^{2}-\left\langle G\ast\sqrt{A_{1}}u'(t), \sqrt{A_{1}}u(t)\right\rangle
-G(t)\left\langle \sqrt{A_{1}}u(0), \sqrt{A_{1}}u(t)\right\rangle.
\end{eqnarray}
Moreover, it follows from \cite[Equation (2.13)]{28} that there is $\mu>0$ such that
   \begin{eqnarray}\label{3-10-1}
  \nonumber&&\left(1-G(0)\right)\left\|\sqrt{A_{1}}u(t)\right\|^{2}+\frac{\alpha}{2}\|u(t)\|^{2}+\frac{1}{2} \left\|\sqrt{A_{2}}v(t)\right\|^{2}+\left\langle B_{1}u(t), v(t)\right\rangle\\[0.15cm]
  &\ge&\mu \left(\left\|\sqrt{A_{1}}u(t)\right\|^{2}+\left\|\sqrt{A_{1}}u(t)\right\|^{2}\right).
    \end{eqnarray}
By Young's inequality, \eqref{6-13} and \eqref{3-10-1}, we deduce that
\begin{eqnarray}\label{6-13-1}
\nonumber&&\frac{d}{dt}\left(\left\langle u'(t), u(t)\right\rangle+\left\langle v'(t), v(t)\right\rangle\right)\\[0.15cm]
\nonumber&\leq&-\mu\left(\left\|\sqrt{A_{1}}u(t)\right\|^{2}+\left\|\sqrt{A_{1}}v(t)\right\|^{2}\right)+\left\|u'(t)\right\|^{2}+\left\|v'(t)\right\|^{2}\\[0.15cm]
\nonumber&&+\frac{\mu}{2}\left\|\sqrt{A_{1}}u(t)\right\|^{2}+\frac{1}{\mu}\left\|G\ast\sqrt{A_{1}}u'(t)\right\|^{2}+\frac{G^{2}(t)}{\mu}\left\|\sqrt{A_{1}}u(0)\right\|^{2}\\[0.15cm]
\nonumber&\leq&-\frac{\mu}{2}\left(\left\|\sqrt{A_{1}}u(t)\right\|^{2}+\left\|\sqrt{A_{2}}v(t)\right\|^{2}\right)
+\left\|u'(t)\right\|^{2}+\left\|v'(t)\right\|^{2}\\[0.15cm]
&&+\frac{1}{\mu}\left\|G\ast\sqrt{A_{1}}u'(t)\right\|^{2}+\frac{G^{2}(t)}{\mu}\left\|\sqrt{A_{1}}u(0)\right\|^{2}.
\end{eqnarray}
Therefore, multiplying \eqref{6-13-1} by $(t+1)^{\lambda}$ and integrating it yields that
\begin{eqnarray}\label{6-14}
\nonumber&&\int_{0}^{t}(s+1)^{\lambda}\left(\left\|\sqrt{A_{1}}u(s)\right\|^{2}+\left\|\sqrt{A_{2}}v(s)\right\|^{2}\right)ds\\[0.15cm]
\nonumber&\leq&CE(0)+C(t+1)^{\lambda}E(t)
+C\int_{0}^{t}(s+1)^{\lambda}\left(\left\|u'(s)\right\|^{2}+\left\|v'(s)\right\|^{2}\right)ds\\[0.15cm]
\nonumber&&+C\int_{0}^{t}\lambda(s+1)^{\lambda-1}\left(\left\|u'(s)\right\|^{2}+\left\|v'(s)\right\|^{2}+\left\|\sqrt{A_{1}}u(s)\right\|^{2}+\left\|\sqrt{A_{2}}v(s)\right\|^{2}\right)ds\\[0.15cm]
&&+C\int_{0}^{t}(s+1)^{\lambda}\left\|G\ast\sqrt{A_{1}}u'(s)\right\|^{2}ds+CE(0)\int_{0}^{t}(s+1)^{\lambda}G^{2}(s)ds.
\end{eqnarray}
In addition,
\begin{eqnarray}\label{6-16}
\nonumber\int_{0}^{+\infty}(t+1)^{\lambda_{0}-1}|G(t)|dt&\leq& \int_{0}^{+\infty}\int_{t}^{+\infty}(t+1)^{\lambda_{0}-1}|g(s)|dsdt\\[0.15cm]
\nonumber&\leq&\int_{0}^{+\infty}\int_{0}^{s}(t+1)^{\lambda_{0}-1}|g(s)|dtds\\[0.15cm]
\nonumber&\leq&\frac{1}{\lambda_{0}}\int_{0}^{+\infty}(s+1)^{\lambda_{0}}|g(s)|ds\\[0.15cm]
&\leq&CG_{\lambda_{0}}.
\end{eqnarray}
So, we get
\begin{eqnarray}\label{6-17}
\left\|G\ast\sqrt{A_{1}}u'(s)\right\|^{2}\leq C\int_{0}^{s}(s-\tau+1)^{1-\lambda_{0}}|G(s-\tau)|\left\|\sqrt{A_{1}}u'(\tau)\right\|^{2}d\tau.
\end{eqnarray}
Furthermore, if $\lambda_{0}\geq 1$, then taking $\lambda\leq 2\left(\lambda_{0}-1\right)$, we have
\begin{eqnarray}\label{6-18}
\nonumber&&\int_{0}^{t}(s+1)^{\lambda}\left\|G\ast\sqrt{A_{1}}u'(s)\right\|^{2}ds\\[0.15cm]
\nonumber&\leq & C\int_{0}^{t}\int_{0}^{s}(s-\tau+1)^{\lambda-\lambda_{0}+1}| G(s-\tau)|(\tau+1)^{\lambda}\left\|\sqrt{A_{1}}u'(\tau)\right\|^{2}d\tau ds\\[0.15cm]
\nonumber&\leq&C\int_{0}^{t}\int_{\tau}^{t}(s-\tau+1)^{\lambda_{0}-1}|G(s-\tau)|(\tau+1)^{\lambda}\left\|\sqrt{A_{1}}u'(\tau)\right\|^{2}ds d\tau \\[0.15cm]
&\leq&C\int_{0}^{t}(s+1)^{\lambda}\left\|\sqrt{A_{1}}u'(s)\right\|^{2} ds.
\end{eqnarray}
If $\lambda_{0}\geq 1$, then taking $\lambda\leq 2\lambda_{0}-1$, we know that $(s+1)^{\lambda+1-\lambda_{0}}|G(s)|$ is bounded. Hence
\begin{eqnarray}\label{G}
\nonumber\int_{0}^{+\infty}(s+1)^{\lambda}G^{2}(s)ds&=&\int_{0}^{+\infty}(s+1)^{\lambda+1-\lambda_{0}}|G(s)|(s+1)^{\lambda_{0}-1}|G(s)|ds\\[0.15cm]
&\le&C\int_{0}^{+\infty}(s+1)^{\lambda_{0}-1}|G(s)|ds\le C.
\end{eqnarray}
Thus, combining \eqref{6-14}-\eqref{G}, we get \eqref{6-19}. This concludes the proof.
\end{proof}

\subsection{The proof of Theorem \ref{6T2}}
\begin{proof}[The proof of Theorem \ref{6T2}] We divide the proof into five steps.

$\mathbf{Step}$ $\mathbf{1.}$ It follows from \eqref{DI} that
\begin{eqnarray}\label{U}
\nonumber&&\int_{0}^{t}(s+1)^{\lambda}\left\|u''(s)\right\|^{2}ds\\[0.15cm]
\nonumber&=&\int_{0}^{t}(s+1)^{\lambda}\left\langle u''(s),u''(s)\right\rangle ds\\[0.15cm]
\nonumber&=&(s+1)^{\lambda}\left\langle u''(s),u'(s)\right\rangle_{0}^{t}-\lambda\int_{0}^{t}(s+1)^{\lambda-1}\left\langle u''(s),u'(s)\right\rangle ds\\[0.15cm]
\nonumber&&-\int_{0}^{t}(s+1)^{\lambda}\left\langle u'''(s),u'(s)\right\rangle ds\\[0.15cm]
\nonumber&=&(s+1)^{\lambda}\left\langle u''(s),u'(s)\right\rangle_{0}^{t}-\lambda\int_{0}^{t}(s+1)^{\lambda-1}\left\langle u''(s),u'(s)\right\rangle ds\\[0.15cm]
\nonumber&&+\int_{0}^{t}(s+1)^{\lambda}\left\langle A_{1}u'(s)-g(s)A_{1}(0)-g\ast A_{1}u'(s)+\alpha u'(s)+B_{2}v'(s), u'(s)\right\rangle ds\\[0.15cm]
\nonumber&\le&C+C(t+1)^{\lambda}E_{2}(t)+C\int_{0}^{t}(s+1)^{\lambda-1}E_{2}(s)ds\\[0.15cm]
&&+\varepsilon\int_{0}^{t}(s+1)^{\lambda}\left\|\sqrt{A_{2}}v'(s)\right\|^{2}ds+C(\varepsilon)\int_{0}^{t}(s+1)^{\lambda}\left\|\sqrt{A_{1}}u'(s)\right\|^{2}ds.
\end{eqnarray}
Similarly, it follows from \eqref{DII} that
\begin{eqnarray}\label{V}
\nonumber&&\int_{0}^{t}(s+1)^{\lambda}\left\|v''(s)\right\|^{2}ds\\[0.15cm]
\nonumber&\le&C+C(t+1)^{\lambda}E_{2}(t)+C\int_{0}^{t}(s+1)^{\lambda-1}E_{2}(s)ds\\[0.15cm]
&&+\frac{5}{4}\int_{0}^{t}(s+1)^{\lambda}\left\|\sqrt{A_{2}}v'(s)\right\|^{2}ds+C\int_{0}^{t}(s+1)^{\lambda}\left\|\sqrt{A_{1}}u'(s)\right\|^{2}ds.
\end{eqnarray}
$\mathbf{Step}$ $\mathbf{2.}$ By \eqref{U}, \eqref{V} and Lemma \ref{6L3}, we have
\begin{eqnarray*}
\nonumber&&\int_{0}^{t}(s+1)^{\lambda}\left(k\left\|u''(s)\right\|^{2}+\left\|v''(s)\right\|^{2}+2\left\|\sqrt{A_{2}}v'(s)\right\|^{2}\right)ds\\[0.15cm]
\nonumber&\le&C(k)+C(k)(t+1)^{\lambda}\left(E_{2}(t)+\int_{0}^{t}|g(t-s)|\left\|\sqrt{A_{1}}u'(s)\right\|^{2}ds\right)\\[0.15cm]
\nonumber&&+C(k)\int_{0}^{t}(s+1)^{\lambda-1}E_{2}(s)ds+\left(\frac{5}{4}+k\varepsilon\right)\int_{0}^{t}(s+1)^{\lambda}\left\|\sqrt{A_{2}}v'(s)\right\|^{2}ds\\[0.15cm]
\nonumber&&+C(k,\varepsilon)\int_{0}^{t}(s+1)^{\lambda}\left\|\sqrt{A_{1}}u'(s)\right\|^{2}ds\\[0.15cm]
&&+\frac{1}{4}\int_{0}^{t}(s+1)^{\lambda}\left\|v''(s)\right\|^{2}ds+\overline{C}\int_{0}^{t}(s+1)^{\lambda}\left\|u''(s)\right\|^{2}ds.
\end{eqnarray*}
Therefore, taking $k=\overline{C}+1,~\varepsilon=\frac{1}{4k}$ yields that
\begin{eqnarray*}
\nonumber&&\int_{0}^{t}(s+1)^{\lambda}\left(\left\|u''(s)\right\|^{2}+\left\|v''(s)\right\|^{2}+\left\|\sqrt{A_{2}}v'(s)\right\|^{2}\right)ds\\[0.15cm]
\nonumber&\le&C+C(t+1)^{\lambda}\left(E_{2}(t)+\int_{0}^{t}|g(t-s)|\left\|\sqrt{A_{1}}u'(s)\right\|^{2}ds\right)\\[0.15cm]
\nonumber&&+C\int_{0}^{t}(s+1)^{\lambda-1}E_{2}(s)ds+C\int_{0}^{t}(s+1)^{\lambda}\left\|\sqrt{A_{1}}u'(s)\right\|^{2}ds\\[0.15cm]
\nonumber&\leq& C+C(t+1)^{\lambda}E_{2}(t)+C(t+1)^{\lambda}\int_{0}^{t}|g(t-s)|\left\|\sqrt{A_{1}}u'(s)\right\|^{2}ds\\[0.15cm]
\nonumber&&+C\int_{0}^{t}(s+1)^{\lambda-1}\left(\left\|u''(s)\right\|^{2}+\left\|v''(s)\right\|^{2}+\left\|\sqrt{A_{2}}v'(s)\right\|^{2}\right)ds\\[0.15cm]
&&+C\int_{0}^{t}(s+1)^{\lambda}\left\|\sqrt{A_{1}}u'(s)\right\|^{2}ds.
\end{eqnarray*}
Thus, we get, for any $\lambda\leq\lambda_{0}$,
\begin{eqnarray}\label{6-31}
\nonumber&&\int_{0}^{t}(s+1)^{\lambda}\left(\left\|u''(s)\right\|^{2}+\left\|v''(s)\right\|^{2}+\left\|\sqrt{A_{2}}v'(s)\right\|^{2}\right)ds\\[0.15cm]
\nonumber&&\leq C+C(t+1)^{\lambda}E_{2}(t)+C\int_{0}^{t}(s+1)^{\lambda}\left\|\sqrt{A_{1}}u'(s)\right\|^{2}ds\\[0.15cm]
&&+C(t+1)^{\lambda}\int_{0}^{t}|g(t-s)|\left\|\sqrt{A_{1}}u'(s)\right\|^{2}ds.
\end{eqnarray}
Moreover, we have,  for any $\lambda\leq\lambda_{0}$,
\begin{eqnarray}\label{6-31-1}
\nonumber&&\int_{0}^{t}(s+1)^{\lambda-1}\left(\left\|u''(s)\right\|^{2}+\left\|v''(s)\right\|^{2}+\left\|\sqrt{A_{2}}v'(s)\right\|^{2}\right)ds\\[0.15cm]
\nonumber&&\leq C+C(t+1)^{\lambda-1}E_{2}(t)+C\int_{0}^{t}(s+1)^{\lambda-1}\left\|\sqrt{A_{1}}u'(s)\right\|^{2}ds\\[0.15cm]
&&+C(t+1)^{\lambda-1}\int_{0}^{t}|g(t-s)|\left\|\sqrt{A_{1}}u'(s)\right\|^{2}ds.
\end{eqnarray}

$\mathbf{Step}$ $\mathbf{3.}$  If $\lambda_{0}\geq 1$,  $$\min\{\lambda_{0},2\lambda_{0}-1\}=\lambda_{0}.$$ Then,  by \eqref{6-19}, we have, for any $\lambda\leq\lambda_{0}$ ($\lambda_{0}\geq 1$),
\begin{eqnarray}\label{6-19-1}
\nonumber&&\int_{0}^{t}(s+1)^{\lambda-1}\left(\left\|\sqrt{A_{1}}u(s)\right\|^{2}+\left\|\sqrt{A_{2}}v(s)\right\|^{2}\right)ds\\[0.15cm]
\nonumber&\leq&CE(0)+C(t+1)^{\lambda-1}E(t)\\[0.15cm]
&&+C\int_{0}^{t}(s+1)^{\lambda-1}\left(\left\|\sqrt{A_{1}}u'(s)\right\|^{2}+\left\|\sqrt{A_{1}}v'(s)\right\|^{2}\right) ds,~~t\ge 0.
\end{eqnarray}
$\mathbf{Step}$ $\mathbf{4.}$  Putting \eqref{6-31-1}, \eqref{6-19-1} into \eqref{6-12} gives that, if $\lambda_{0}\geq 1$,  for any $\lambda\leq \lambda_{0}$,
\begin{eqnarray}\label{6-32}
\nonumber&&(t+1)^{\lambda}\left(E(t)+E_{2}(t)\right)+\int_{0}^{t}(s+1)^{\lambda}\left\|\sqrt{A_{1}}u'(s)\right\|^{2}ds\\[0.15cm]
\nonumber&\leq & C+C(t+1)^{\lambda-1}\left(E(t)+E_{2}(t)\right)+C\int_{0}^{t}(s+1)^{\lambda-1}\left\|\sqrt{A_{1}}u'(s)\right\|^{2}ds\\[0.15cm]
&&+C(t+1)^{\lambda-1}\int_{0}^{t}|g(t-s)|E_{2}(s)ds.
\end{eqnarray}
That is, if $\lambda_{0}\geq 1$,  for any $\lambda\leq \lambda_{0}$,
\begin{eqnarray}\label{6-32}
\nonumber&&(t+1)^{\lambda}\left(E(t)+E_{2}(t)\right)+\int_{0}^{t}(s+1)^{\lambda}\left\|\sqrt{A_{1}}u'(s)\right\|^{2}ds\\[0.15cm]
\nonumber&\leq & C+C(t+1)^{\lambda-1}\int_{0}^{t}|g(t-s)|E_{2}(s)ds\\[0.15cm]
&\leq & C+C\int_{0}^{t}(t-s+1)^{\lambda-1}|g(t-s)|(s+1)^{\lambda-1}E_{2}(s)ds.
\end{eqnarray}
Therefore, by mathematical induction, we have, for  any $\lambda\leq \lambda_{0}$($\lambda_{0}\geq 1$), $t\geq 0$,
\begin{eqnarray}\label{6-32-11}
(t+1)^{\lambda}\left(E(t)+E_{2}(t)\right)+\int_{0}^{t}(s+1)^{\lambda}\left\|\sqrt{A_{1}}u'(s)\right\|^{2}(s)ds&\leq & C.
\end{eqnarray}
Therefore, \eqref{6-32-11} implies that for any $\lambda_{0}\geq 1$,
\begin{eqnarray}
E(t)\leq C(t+1)^{-\lambda_{0}}, \qquad t\geq 0.
\end{eqnarray}
$\mathbf{Step}$ $\mathbf{5.}$
Meanwhile, it follows from \eqref{6-32-11} and \eqref{6-31} that  for any $\lambda\leq \lambda_{0}$($\lambda_{0}\geq 1$),
\begin{eqnarray}\label{6-32-2}
\int_{0}^{t}(s+1)^{\lambda}E_{2}(s)ds\leq  C.
\end{eqnarray}
Hence, by \eqref{6-19}, we see that if $\lambda_{0}\geq 1$, then for any $$\lambda\leq \min\{\lambda_{0}, 2(\lambda_{0}-1)\},$$
\begin{eqnarray}\label{6-36}
\int_{0}^{t}(s+1)^{\lambda}\left(\left\|\sqrt{A_{1}}u(s)\right\|^{2}+\left\|\sqrt{A_{2}}v(s)\right\|^{2}\right)ds\leq C.
\end{eqnarray}
Finally, by virtue of \eqref{6-32-2} and \eqref{6-36},  we get the desired conclusion. The proof is complete.
\end{proof}

\begin{remark}\label{5NR1}{\em
If the assumptions in ${\rm(I'_{3})}$ $$(t+1)^{\lambda_{0}}g(t),~(t+1)^{\lambda_{0}}g'(t) \in L^{1}(0,+\infty),$$
$$\int_{t}^{+\infty}g(s)ds ~{\rm~is~a~ strongly}~ (t+1)^{\lambda_{0}}-{\rm positive~ definite~kernel},$$
are replaced by that, for any constant $a>0$,
$$(t+a)^{\lambda_{0}}g(t),~(t+a)^{\lambda_{0}}g'(t) \in L^{1}(0,+\infty),$$
$$\int_{t}^{+\infty}g(s)ds ~~is~a~ strongly~ (t+a)^{\lambda_{0}}-positive~ definite~kernel,$$
then the Theorem \ref{6T2} still holds.}
\end{remark}

\section{Applications}
In this section, we present two examples showing how to apply our abstract results to specific
problems.

\begin{example}\label{AE1}{\em
We consider the following Timoshenko system, which describes the transverse vibration of a beam of length $1$:
\begin{align}
&\rho_{1}\varphi_{tt}-\left(b_{1}(x)\varphi_{x}+q\psi\right)_{x}=0, ~~(x,t)\in(0,1)\times(0,\infty), \label{8-1}\\[0.15cm]
&\rho_{2}\psi_{tt}-\left(b_{2}(x)\psi_{x}-g\ast b_{2}(x)\psi_{x}\right)_{x} +q\varphi_{x}+\alpha_{1}\psi=0,~~(x,t)\in(0,1)\times(0,\infty),\label{8-2}
\end{align}
with the initial data
\begin{eqnarray}\label{8-3}
\varphi(\cdot,0)=\varphi_{0},\quad \varphi_{t}(\cdot,0)=\varphi_{1},\quad \psi(\cdot,0)=\psi_{0},\quad \psi_{1}(\cdot,0)=\psi_{1}, \quad x\in (0,1),
\end{eqnarray}
subject to the boundary conditions
\begin{align}\label{8-5}
\left\{\begin{aligned}
&\sigma_{11}\varphi(0,t)-\tau_{11}\varphi_{x}(0,t)=0,~~\sigma_{12}\varphi(1,t)+\tau_{12}\varphi_{x}(1,t)=0,\\[0.15cm]
&\sigma_{21}\psi(0,t)-\tau_{21}\psi_{x}(0,t)=0,~~\sigma_{22}\psi(1,t)+\tau_{22}\psi_{x}(1,t)=0,
\end{aligned}\right.
\end{align}
where $\sigma_{ij}, \tau_{ij}$ are nonnegative constants such that, for $i,j=1,2$,
\begin{eqnarray*}
\sigma_{ij}+\tau_{ij}>0,~~\sigma_{i1}+\sigma_{i2}>0,~~\tau_{1j}\tau_{2j}=0.
\end{eqnarray*}
Here, the unknown functions $\varphi$ and $\psi$ describe, respectively, the transverse displacement of the beam and the rotation angle of a filament,
 $\rho_{1}$, $\rho_{2}$, $q$, and $\alpha_{1}$ are positive constants and $0<b_{1}(x), b_{2}(x)\in C^{1}\left([0,1]\right)$ satisfying
 \begin{align*}
 \rho_{1}^{-1}b_{1}(x)=\rho_{2}^{-1}b_{2}(x),~~x\in[0,1]; \quad
   \alpha_{1}\ge \frac{q^{2}}{\min b_{1}}.
 \end{align*}
Let
\begin{eqnarray*}
u=\psi,\quad v=\sqrt{\frac{\rho_{1}}{\rho_{2}}}\varphi,\quad \alpha=\frac{\alpha_{1}}{\rho_{2}},\quad a(x)=\frac{b_{1}(x)}{\rho_{1}}.
\end{eqnarray*}
Take $H=L^{2}(0,l)$,
\begin{eqnarray*}
A_{1}=-\frac{d}{dx}\left(a\frac{d}{dx}.\right),\quad A_{2}=-\frac{d}{dx}\left(a\frac{d}{dx}.\right),\quad B_{2}=-B_{1}=\frac{q}{\sqrt{\rho_{1}\rho_{2}}}\frac{d}{dx},
\end{eqnarray*}
with
\begin{eqnarray*}
&&\mathcal{D}(A_{1})= \left\{y \in H^{2}(0,1); ~~\sigma_{21}y(0)-\tau_{21}y'(0)=\sigma_{22}y(1)+\tau_{22}y'(1)=0\right\},\\[0.15cm]
&&\mathcal{D}(A_{2})= \left\{y \in H^{2}(0,1); ~~\sigma_{11}y(0)-\tau_{11}y'(0)=\sigma_{12}y(1)+\tau_{12}y'(1)=0\right\},\\[0.15cm]
&&\mathcal{D}(B_{1})=\mathcal{D}(B_{2})=H^{1}(0,1).
\end{eqnarray*}
Then, from \cite[Example 4.1]{28}, the system \eqref{8-1}-\eqref{8-5} can be transformed into the abstract Cauchy problem \eqref{NI}-\eqref{NVI}, and  assumptions ${\rm (I_{1})}$, ${\rm (I_{2})}$  and ${\rm (I_{4})}$ are satisfied.

Therefore, if $g(t)$ satisfies ${\rm (I'_{3})}$, then we conclude that the solution energy of the system \eqref{8-1}-\eqref{8-5} satisfies
\begin{eqnarray*}
E(t)\leq C(t+1)^{-\lambda_{0}}, \quad t\ge 0.
\end{eqnarray*}
Here, $C$ is a positive constant.}
\end{example}

\begin{example}{\em
Consider the system of coupled Petrovsky type equations as follows:
\begin{equation*}\left\{\begin{aligned}
&\partial_{t}^{2}u(t,\xi)+\Delta^{2}u(t,\xi)-\int_0^tg(t-s)\Delta^{2}u(s,\xi)ds+\alpha u(t,\xi)-\beta\Delta v(t,\xi)=0,&t\ge 0, \xi\in \Omega, \\[0.15cm]
&\partial_{t}^{2}v(t,\xi)+\Delta^{2}v(t,\xi)-\beta\Delta u(t,\xi)=0,&t\ge 0, \xi\in \Omega,\\[0.15cm]
&u(t,\xi)=v(t,\xi)=\Delta u(t,\xi)=\Delta v(t,\xi)=0,&t\ge 0, \xi\in \partial\Omega,\\[0.15cm]
&u(0,\xi)=u_{0}(\xi), v(0,\xi)=v_{0}(\xi), \partial_{t}u(0,\xi)=u_{1}(\xi), \partial_{t}v(0,\xi)=v_{1}(\xi),&\xi\in \Omega,
\end{aligned}
\right.\end{equation*}
where $\Omega$ is a bounded domain in $\mathbb{R}^{3}$, with smooth boundary $\partial\Omega$, $\alpha\ge 0$, $\beta>0$. Take $H=L^{2}(\Omega)$,
\begin{eqnarray*}
& &B=-\Delta,~~\mathcal{D}(B)=H^{2}(\Omega)\cap H^{1}_{0}(\Omega),\\[0.15cm]
& &A_{1}=A_{2}=B^{2},~~B_{1}=B_{2}=B.
\end{eqnarray*}
Clearly, assumptions ${\rm (I_{1})}$,  ${\rm (I_{2})}$ and ${\rm (I_{4})}$  are satisfied. Thus, we can utilize our Theorem \ref{6T2} to obtain various rates of energy decay based on the properties of the memory kernel functions $g$.}
\end{example}

\vspace{0.5cm}

{\bf Acknowledgements}

The work was supported partly by the National Natural Science Foundation of China (12361049, 12371116, 12171094) and the Shanghai Key Laboratory for Contemporary Applied Mathematics (08DZ2271900).

{\bf Contributions}

The authors contributed equally and significantly in writing this paper.

{\bf Data Availability}

Data sharing is not applicable to this article as no datasets were generated or analyzed during the current study.

{\bf Conflict of interest}

The authors have no competing interests to declare.

{\bf Ethical Statement}

There are no ethical concerns applicable to our research.

}
\end{document}